\theoremstyle{plain}
\newtheorem{proposition}{Proposition}
\newtheorem{lem}[proposition]{Lemma}
\newtheorem*{corollary}{Corollary}
\newtheorem{theorem}[proposition]{Theorem}
\theoremstyle{definition}
\newtheorem{example}[proposition]{Example}
\newtheorem{remark}[proposition]{Remark}
\newtheorem*{acknowledgement}{Acknowledgement}
\newtheorem{defn}[proposition]{Definition}
\begin{document}
\def\thefootnote{}
\footnotetext{\!\!\!2000 Mathematics Subject Classification: 14H50, 14E07}
\def\thefootnote{1}

\title{Hyperelliptic plane curves of type $(d,d-2)$}
\author{Fumio SAKAI, Mohammad SALEEM\footnote{%
Partially supported by Post Doctoral Fellowship for Foreign Researchers, 1705292, JSPS.}
 \,\,and Keita TONO}
\date{}
\maketitle
\begin{abstract}
In \cite{SS}, we classified and constructed all rational plane curves of type $(d,d-2)$. In this paper, we generalize these results to irreducible plane curves of type $(d,d-2)$ with positive genus.
\end{abstract}

\section{Introduction}

Let $C\subset {\mathbf{P}}^2={\mathbf{P}}^2(\mathbf{C})$
be a plane curve of degree $d$.
We call $C$ a {\sl plane curve of type} $(d,\nu)$
if the maximal multiplicity of singular points on $C$
is equal to $\nu$.
A unibranched singularity is called a \textsl{cusp}.
Rational cuspidal plane curves of type $(d,d-2)$ and $(d,d-3)$
were classified by Flenner--Zaidenberg \cite{FZ1,FZ2}
(See also \cite{F, ST} for some cases).
In \cite{SS}, we classified rational plane curves of type $(d,d-2)$
with arbitrary singularities.
In order to describe a multibranched singularity $P$,
we introduced the notion of the system of the multiplicity sequences
$\underline{m}_P(C)$ (See Sect.2).
We denote by Data$(C)$, the collection of such systems
of multiplicity sequences.
The purpose of this paper is to complete the classification
of irreducible plane curves of type $(d,d-2)$
with positive genus $g$.
We remark that if $g\geq 2$, then $C$ is a hyperelliptic curve,
for the projection from the singular point of multiplicity $d-2$
induces a double covering of $C$ over $\mathbf{P}^1$.     

\begin{theorem}\label{theorem 1}Let $C$ be a plane curve of
type $(d,d-2)$ with genus $g$.
Let $Q\in C$ be the singular point of multiplicity $d-2$.
Then, we have 
\begin{enumerate}
\item[{\rm (i)}]
{\rm Data}$(C) =\left[ \underline{m}_{Q}( C) ,{\binom{1}{1}}_{b_{1}},\ldots, {\binom{1}{1}}_{b_{n}},(2_{b_{n+1}}), \ldots, (2_{b_{n+n'}})\right]$,
where 
\[
\underline{m}_{Q}(C) =\left\{
\!\begin{array}{ll}
\left(
\def\arraystretch{1.25}
\!\!\begin{array}{l}
k_1\\
k'_1\\
\,\vdots\\
k_s\\
k'_s\\
k_{s+1}\\
\,\vdots\\
k_N
\end{array}\!\!\!\!\right)
&
\def\arraystretch{1.127}
\!\!\!\!\begin{array}{c}
\!\!\!\!\left(\!\!\!
{\scriptscriptstyle
\begin{array}{l}
1\\
1
\end{array}}\!\!\!\right)_{a_1}\\
\!\!\!\!\vdots\quad{}\\
\!\!\!\!\left(\!\!\!
{\scriptscriptstyle
\begin{array}{l}
1\\
1
\end{array}}\!\!\!\right)_{a_s}\\
\,2_{a_{s+1}}\\
\!\!\!\!\raisebox{2pt}{$\vdots$}\quad{}\\
\!\!\!2_{a_{\raisebox{-.5pt}{$\scriptscriptstyle N$}}}
\end{array}
\end{array}
\def\arraystretch{1}
\!\!\!\!\right\}\qquad\qquad\qquad\qquad
\]
and the following conditions are satisfied: 
\begin{enumerate}
\item[{\rm (1)}]
${\displaystyle \sum_{h=1}^Nk_{h}+\sum_{h^{\prime }=1}^sk'_{h'}}=d-2$
and
${\displaystyle \sum_{i=1}^{N} a_{i}+\sum_{j=1}^{n+n'}b_j}=d-g-2$,
where  $a_{i}\geq 0$ \,\,($a_i>0$ for $i=1,\ldots,s$), $b_{j}>0$,
\item[{\rm (2)}]
we have $n, n', s \geq 0$ and $n'+s'\leq 2g+2$,
where $s'=\#\{j|a_{s+j}>0\}$,
\item[{\rm (3)}]
for $i=1,2,\ldots,s$, if $k'_{i}=k_{i}$,
then $a_{i}\geq k_{i}$ and if $k'_{i}>k_{i}$, then $a_{i}=k_{i}$, 
\item[{\rm (4)}]
for $i=s+1, \ldots, N$, if $a_{i}> 0$, then either $k_{i}$
is even and $a_{i}\geq k_{i}/2$ or $k_{i}$ is odd and $a_{i}=(k_{i}-1)/2$. 
\end{enumerate}
Note that the $N$ is the number of the different tangent lines to $C$ at $Q$. 
\item[{\rm (ii)}]
Data$(C)$ can be derived from Degtyarev's 2--formula $T(C)$
defined for the defining equation of $C$
(See Proposition~\ref{formula} for details). 
\end{enumerate}
\end{theorem}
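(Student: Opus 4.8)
The plan is to study $C$ through the linear projection $\pi\colon C\dashrightarrow\mathbf{P}^{1}$ with centre $Q$. Since $\operatorname{mult}_{Q}C=d-2$ and $\deg C=d$, a general line through $Q$ meets $C$ in exactly two points besides $Q$, so $\pi$ induces a degree-$2$ morphism $\bar\pi\colon\widetilde{C}\to\mathbf{P}^{1}$ from the normalisation $\widetilde{C}$; in particular $\bar\pi$ is finite flat, every fibre has length $2$, and by Riemann--Hurwitz the branch locus of $\bar\pi$ consists of exactly $2g+2$ points. All of part (i) will be obtained by translating the local geometry of $C$ into properties of $\bar\pi$ and counting, and part (ii) will then follow by matching the resulting picture with Degtyarev's $2$-formula.

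The basic local input is B\'ezout together with the irreducibility of $C$: for every line $\ell\ni Q$ one has $i(Q,\ell)\leq d$, hence $i(Q,\ell)=d-2$ for a general $\ell$ and $i(Q,\ell_{i})\in\{d-1,d\}$ for each of the $N$ tangent lines $\ell_{1},\dots,\ell_{N}$ of $C$ at $Q$; equivalently, every point of $\widetilde{C}$ over $Q$ has ramification index $\leq 2$. I would first use this to determine the shape of $\underline{m}_{Q}(C)$. Blow up $Q$: the pencil of lines through $Q$ becomes the ruling of the Hirzebruch surface $\Sigma_{1}$, the exceptional curve $E$ is a section, and the proper transform $\widetilde{C}_{1}$ meets $E$ at $N$ points $P_{1},\dots,P_{N}$, one over each $[\ell_{i}]$. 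For a branch $B$ of $C$ at $Q$ of multiplicity $\mu$ with tangent $\ell$ one has $i(Q,\ell|_{B})=\mu+i_{P}(\ell',B_{1})$, where $\ell'$ is the fibre over $[\ell]$ and $B_{1}$ the proper transform; together with $i(Q,\ell|_{B})\leq\mu+2$ this gives $\operatorname{mult}_{P}(B_{1})\leq 2$, and the same bound applied to $\ell_{i}$ itself gives $\operatorname{mult}_{P_{i}}(\widetilde{C}_{1})\leq 2$, so a tangent direction at $Q$ carries at most two branches and exactly the two column-types of $\underline{m}_{Q}(C)$ occur. Conditions (3) and (4) then drop out. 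If direction $i$ carries two branches $B,B'$ of multiplicities $k_{i}\leq k_{i}'$, then $i_{Q}(\ell_{i}|_{B})+i_{Q}(\ell_{i}|_{B'})\leq k_{i}+k_{i}'+2$ against $i_{Q}(\ell_{i}|_{B})\geq k_{i}+1$ and $i_{Q}(\ell_{i}|_{B'})\geq k_{i}'+1$ forces equality, so both are cusps with multiplicity sequence $(k,1,1,\dots)$ and proper transforms smooth at $P_{i}$; a direct computation of $i(B,B')$ then yields $a_{i}=k_{i}$ when $k_{i}<k_{i}'$ and $a_{i}\geq k_{i}$ when $k_{i}=k_{i}'$, which is (3). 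If direction $i$ carries a single branch of multiplicity $k_{i}$, the sequence $(k_{i},2,\dots,2,1,\dots)$ with $a_{i}$ twos is a legal unibranch multiplicity sequence exactly when $k_{i}$ is even with $a_{i}\geq k_{i}/2$, or $k_{i}$ is odd with $a_{i}=(k_{i}-1)/2$, which is (4).

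Next the singular points $P\neq Q$. For such $P$ the line $\overline{QP}$ gives $\operatorname{mult}_{P}C+d-2\leq i(P,\overline{QP})+i(Q,\overline{QP})\leq d$, so $\operatorname{mult}_{P}C\leq 2$, and when $\operatorname{mult}_{P}C=2$ the line $\overline{QP}$ is transverse to the tangent cone at $P$ (else $i(P,\overline{QP})\geq 3$ contradicts $i(Q,\overline{QP})\geq d-2$). Hence $P$ is either unibranch with multiplicity sequence $(2,\dots,2)$---the symbol $(2_{b})$---or has two smooth branches of mutual contact $b$---the symbol ${\binom{1}{1}}_{b}$; no other type survives. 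Following $\bar\pi$ through the blow-ups, a $(2_{b})$ point produces exactly one branch point of $\bar\pi$ (its branch folds under projection from $Q$ although $\overline{QP}$ is a general line for it), while a ${\binom{1}{1}}_{b}$ point produces none (its two transverse branches give two distinct unramified points of $\widetilde{C}$). Together with the $s'$ directions at $Q$ having $a_{i}>0$ (one branch point each) and any ordinary simple tangent lines through $Q$, these exhaust the $2g+2$ branch points, which gives $n'+s'\leq 2g+2$, i.e.\ (2). Finally (1): the first equality is $\operatorname{mult}_{Q}C=d-2$ read off the first blow-up, namely $\sum_{h}k_{h}+\sum_{h'}k_{h'}'=d-2$; the second is the genus formula $\binom{d-1}{2}-g=\delta_{Q}+\sum_{j}b_{j}$ together with the resolution count $\delta_{Q}=\binom{d-2}{2}+\sum_{i}a_{i}$ and $\delta=b$ for each ${\binom{1}{1}}_{b}$ and each $(2_{b})$, which rearranges to $\sum_{i}a_{i}+\sum_{j}b_{j}=d-g-2$.

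For part (ii) I would invoke Degtyarev's $2$-formula $T(C)$ of Proposition~\ref{formula}: $T(C)$ is extracted from the Newton--Puiseux data of a defining polynomial of $C$ adapted to $Q$ and encodes the equisingularity type of every singular point of $C$, which is precisely the information packaged by the systems of multiplicity sequences; the dictionary set up above then recovers $\mathrm{Data}(C)$ from $T(C)$, the only point left being compatibility of normalisations. The step I expect to be the real obstacle is the local analysis at $Q$: showing that the B\'ezout bounds $i(Q,\ell)\leq d$---for a general line, for the $N$ tangent lines, and after the first blow-up---force exactly the two column-shapes of $\underline{m}_{Q}(C)$ and the precise inequalities in (3) and (4), uniformly over all $N$ tangent directions, and extracting from the same bookkeeping the sharp bound in (2).
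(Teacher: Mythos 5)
Your part (i) is essentially sound and, for the shape of Data$(C)$ and conditions (1), (2), follows the same path as the paper: the B\'ezout/blow-up argument showing every singularity other than $Q$ (and every point of the strict transform on $E$) is at most a double point, the genus formula for the second equality in (1), and the Riemann--Hurwitz count of the $2g+2$ branch points of the degree-two projection from $Q$ for (2) (your accounting of which configurations actually produce branch points is slightly loose --- flex-type tangent directions at $Q$ with $a_i=0$ also ramify --- but only a lower bound is needed, so the inequality $n'+s'\leq 2g+2$ stands). For (3) and (4) you take a genuinely different route: the paper refers to \cite{SS} and gives its own proof through the 2--formula (the parity constraint of Lemma~\ref{2FL} fed into the local computation of Proposition~\ref{formula}), whereas you argue geometrically --- the two strict transforms are smooth with contact exactly $k_i$, $k_i'$ with $E$, and the ultrametric property of contact gives (3); the proximity/realizability characterization of unibranch sequences $(k,2_a)$ gives (4). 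Both facts you quote are standard and your deductions from them are correct, though you state rather than prove them; this buys a proof of (3), (4) that never touches the defining equation.

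The genuine gap is part (ii). In the paper, (ii) is not a soft statement: its entire content is the explicit dictionary of Proposition~\ref{formula}, proved by writing $C$ as $Fz^2+2Gz+H=0$, and, for each root $t_i$ of $F\Delta$, exhibiting the local equation $u^2=v^{q_i}$ of $C$ at the residual point (when $p_i=0$) or of the strict transform at $E\cap L_i'$ (when $p_i>0$), computing $(C'E)_P=p_i$, and splitting $p_i$ into $(k_i,k_i')$ via the two branches $\gamma_\pm$ and the relation between $\mathrm{ord}_{t_i}(G)$ and $p_i,q_i$ established in the proof of Lemma~\ref{2FL}. Your proposal replaces all of this by the assertion that $T(C)$ ``encodes the equisingularity type of every singular point'' and by invoking Proposition~\ref{formula} --- which is circular, since that proposition \emph{is} the claim to be proved. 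Moreover $T(C)$ is not Newton--Puiseux data: it records only the pairs $(\mathrm{ord}_{t_i}F,\mathrm{ord}_{t_i}\Delta)$, and that such coarse information determines Data$(C)$ is special to curves of type $(d,d-2)$; it is exactly what the omitted computation establishes. To close the gap you would have to derive, from your geometric picture, the identities $p_i=\sum(\text{multiplicities of the branches tangent to }L_i)$ and the identification of $q_i$ with the local data producing $a_i$ (resp.\ $b_j$), including the case distinction on the parity of $q_i$ and the comparison of $p_i$ with $q_i$ that yields the pair $(k_i,k_i')$.
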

\begin{corollary}\label{cor:2} Let $C$ be an irreducible plane curve of type 
$(d,d-2)$ with genus $g$. 
\begin{enumerate} \item[{\rm (i)}]  If $C$ has only cusps, then $C$ has the following data ($b_{i}> 0, k>0, j\geq 0$):
\[
\renewcommand{\arraystretch}{1.5}%
\begin{array}{c|ll}\hline
\mathrm{Class} & \mathrm{Data}(C) &  \\ \hline
\mathrm{(a)}
& [(k), (2_{b_{1}}), \ldots, (2_{b_{n'}})]
&(k=g+\sum_{i=1}^{n'}b_{i})\\
&&(n'\leq 2g+2)\\
\mathrm{(b)}
& [(2k+1, 2_{k}), (2_{b_{1}}), \ldots ,(2_{b_{n'}})]& (k+1=g+\sum_{i=1}^{n'}b_{i})\\
&&(n'\leq 2g+1)\\
\mathrm{(c)} & [(2k, 2_{k+j}), (2_{b_{1}}), \ldots, (2_{b_{n'}})]& (k=g+j+\sum_{i=1}^{n'}b_{i})\\
&&(n'\leq 2g+1)\\\hline
\end{array}
\]
\item[{\rm (ii)}]
If $C$ has only bibranched singularities,
then $C$ has the following data ($b_{i}>0, k>0, r>0, j\geq 0, l\geq 0$):
\[
\def\arraystretch{1.6}
\begin{array}{c|ll}\hline
\mathrm{Class} & \mathrm{Data}(C) &  \\ \hline
\mathrm{(e)}
&
\Bigl[\binom{k}{k}\binom{1}{1}_{k+j}, {\binom{1}{1}}_{b_{1}},\ldots, {\binom{1}{1}}_{b_{n}}\Bigr]
& (k=g+j+\sum_{i=1}^{n}b_{i}) \\
\mathrm{(f)}
&
\Bigl[\binom{k}{k+r}\binom{1}{1}_{k}, {\binom{1}{1}}_{b_{1}},\ldots, {\binom{1}{1}}_{b_{n}}\Bigr]
& (k+r=g+\sum_{i=1}^{n}b_{i}) \\
\mathrm{(aa)}
&
\Bigl[\binom{k}{r}, {\binom{1}{1}}_{b_{1}},\ldots, {\binom{1}{1}}_{b_{n}}\Bigr]
& (k+r=g+\sum_{i=1}^{n}b_{i}) \\
\mathrm{(ab)}
&
\Bigl[{\bigl\{
\binom{2k+1}{r}
\def\arraystretch{1}
\!\!\begin{array}{l}
\raisebox{-2pt}{$\scriptstyle 2_{k}$}\\
{}
\end{array}\!\!\bigr\}}, {\binom{1}{1}}_{b_{1}},\ldots, {\binom{1}{1}}_{b_{n}}\Bigr]
& (k+r+1=g+\sum_{i=1}^{n}b_{i}) \\
\mathrm{(ac)}
&
\Bigl[{\bigl\{
\binom{2k}{r}
\def\arraystretch{1}
\!\!\begin{array}{l}
\raisebox{-2pt}{$\scriptstyle 2_{k+j}$}\\
{}
\end{array}\!\!\bigr\}}, {\binom{1}{1}}_{b_{1}},\ldots, {\binom{1}{1}}_{b_{n}}\Bigr]
& (k+r=g+j+\sum_{i=1}^{n}b_{i}) \\
\mathrm{(bb)}
&
\Bigl[{\bigl\{
\binom{2k+1}{2r+1}
\def\arraystretch{1}
\!\!\begin{array}{l}
\raisebox{-2pt}{$\scriptstyle 2_{k}$}\\
\raisebox{2.5pt}{$\scriptstyle 2_{r}$}
\end{array}\!\!\bigr\}}, {\binom{1}{1}}_{b_{1}},\ldots, {\binom{1}{1}}_{b_{n}}\Bigr]
& (k+r+2=g+\sum_{i=1}^{n}b_{i}) \\
\mathrm{(bc)}
&
\Bigl[{\bigl\{
\binom{2k+1}{2r}
\def\arraystretch{1}
\!\!\begin{array}{l}
\raisebox{-1.8pt}{$\scriptstyle 2_{k}$}\\
\raisebox{2.5pt}{$\scriptstyle 2_{r+l}$}
\end{array}\!\!\bigr\}}, {\binom{1}{1}}_{b_{1}},\ldots, {\binom{1}{1}}_{b_{n}}\Bigr]
& (k+r+1=g+l+\sum_{i=1}^{n}b_{i}) \\
\mathrm{(cc)}
&
\Bigl[{\bigl\{
\binom{2k}{2r}
\def\arraystretch{1}
\!\!\begin{array}{l}
\raisebox{-2.2pt}{$\scriptstyle 2_{k+j}$}\\
\raisebox{2.8pt}{$\scriptstyle 2_{r+l}$}
\end{array}\!\!\bigr\}}, {\binom{1}{1}}_{b_{1}},\ldots, {\binom{1}{1}}_{b_{n}}\Bigr]
& (k+r=g+j+l+\sum_{i=1}^{n}b_{i}) \\\hline

\end{array}
\]
\end{enumerate}
\end{corollary}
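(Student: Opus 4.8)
The plan is to deduce the Corollary directly from Theorem~\ref{theorem 1} by imposing the branch hypotheses on the singular point $Q$ and on the remaining singularities, and then translating conditions (1)--(4) into the displayed genus relations. Recall first that, by definition, a cusp is unibranched, while in the notation of Theorem~\ref{theorem 1}(i) the symbol $\binom{1}{1}_{b}$ encodes a bibranched singularity (two smooth branches meeting with contact order $b$) and the symbol $(2_{b})$ a unibranched one. The one piece of bookkeeping we need about the system of multiplicity sequences (Sect.~2) is that the point $Q$ carries exactly $s+N$ local branches: each of the first $s$ tangent lines supports the two branches recorded by the pair $k_i,k'_i$, and each of the remaining $N-s$ tangent lines supports the single branch recorded by $k_{s+1},\dots,k_N$. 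Since $Q$ is singular, $N\ge 1$ always.

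For part (i), assume every singularity of $C$ is a cusp. Then $Q$ is unibranched, so $s+N=1$, which forces $s=0$ and $N=1$; moreover no extra singularity may be of type $\binom{1}{1}_{b}$, so $n=0$, and $\mathrm{Data}(C)=[\underline m_Q(C),(2_{b_1}),\dots,(2_{b_{n'}})]$ where $\underline m_Q(C)$ consists of the single entry $k_1=d-2$ together with the symbol $2_{a_1}$. Now split according to the values of $a_1$ allowed by condition (4): if $a_1=0$ we get Class~(a); if $a_1>0$ and $d-2=2k+1$ is odd, then $a_1=k$ and we get Class~(b); if $a_1>0$ and $d-2=2k$ is even, then $a_1\ge k$, write $a_1=k+j$ with $j\ge 0$, and we get Class~(c). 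In each case the first equation in condition (1) expresses $d$ through $k$ (and $j$), and substituting it into the second equation $\sum a_i+\sum b_j=d-g-2$ produces exactly the stated relation $k=g+\sum b_i$ (resp. $k+1=\cdots$, $k=g+j+\cdots$); finally condition (2) gives $s'=0$ and hence $n'\le 2g+2$ when $a_1=0$, and $s'=1$ and hence $n'\le 2g+1$ when $a_1>0$.

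For part (ii), assume every singularity is bibranched. Then $Q$ has exactly two branches, so $s+N=2$ and therefore $(s,N)=(1,1)$ or $(s,N)=(0,2)$; also no extra singularity may be a cusp $(2_b)$, so $n'=0$ and $\mathrm{Data}(C)=[\underline m_Q(C),\binom{1}{1}_{b_1},\dots,\binom{1}{1}_{b_n}]$. If $(s,N)=(1,1)$, then $\underline m_Q(C)$ is $\binom{k_1}{k'_1}\binom{1}{1}_{a_1}$, and condition (3) splits into the case $k'_1=k_1=:k$ with $a_1\ge k$, giving Class~(e) (set $a_1=k+j$), and the case $k'_1>k_1=:k$ with $a_1=k_1$, giving Class~(f) (set $k'_1=k+r$, $r>0$). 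If $(s,N)=(0,2)$, then $\underline m_Q(C)$ is $\binom{k_1}{k_2}$ decorated by $2_{a_1},2_{a_2}$; applying condition (4) to each row and using the symmetry that exchanges the two rows, the pair $(a_1,a_2)$ falls into exactly six configurations: $a_1=a_2=0$ (Class~(aa)); exactly one of $a_1,a_2$ positive with the corresponding $k$ odd or even (Classes (ab), (ac)); and both positive with the two $k$'s odd--odd, odd--even, even--even (Classes (bb), (bc), (cc)). As before, condition (1) then determines $d$ from the multiplicity data and, after substitution, yields the listed equality in each class, while the bound in condition (2) is automatic here because $s'\le N-s\le 2\le 2g+2$.

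The argument is thus a finite, essentially mechanical case analysis rather than a new computation; the only two points that need care are the branch-count identity ``the number of branches at $Q$ equals $s+N$'' and the verification that the case lists in (i) and (ii) are exhaustive and pairwise disjoint. Once these are in hand, conditions (1)--(4) of Theorem~\ref{theorem 1} translate line by line into the two tables. (If one additionally wants the converse --- that every datum in the tables actually occurs --- this follows from the explicit constructions behind Theorem~\ref{theorem 1}, in particular from part (ii) and Proposition~\ref{formula}; but the statement as phrased only asserts that $\mathrm{Data}(C)$ must be one of the listed types, which is precisely what the specialization gives.)
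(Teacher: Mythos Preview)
Your proof is correct and follows exactly the approach implicit in the paper, which states the Corollary immediately after Theorem~\ref{theorem 1} without a separate proof. You have spelled out the case analysis---counting branches at $Q$ as $N+s$, forcing $n=0$ (resp.\ $n'=0$) under the cuspidal (resp.\ bibranched) hypothesis, and then reading off conditions (1)--(4)---that the paper leaves to the reader.
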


\begin{theorem}[Cf. Coble \cite{Cob}, Coolidge \cite{Co}]\label{theorem 2}
Let $C$ be an irreducible plane curve of
type $(d,d-2)$ with genus $g$.
Then, there exists a Cremona transformation
which transforms $C$ into a plane curve:
\[
\Gamma: y^2=\prod_{i=1}^{2g+2}(x-\lambda_i),
\]
with some distinct $\lambda_i$'s. 

Conversely, given a plane curve $\Gamma$ as above
and a collection of systems of multiplicity sequences $M$
satisfying the conditions (1)--(4) in Theorem~\ref{theorem 1}, (i)
for $d\geq g+2$,
then we can find an irreducible plane curve $C$ of type $(d,d-2)$ such that
\begin{enumerate} \item[\rm (a)] Data$(C)=M$,
\item[\rm (b)] $C$ is Cremona birational to $\Gamma$.
\end{enumerate}
\end{theorem}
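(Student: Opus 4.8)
The plan is to prove both assertions by reversing one construction, organised around the pencil $\mathcal L$ of lines through the singular point $Q$ of multiplicity $d-2$. For the first assertion, projection from $Q$ realises $C$ as a double cover $\psi\colon C\dashrightarrow\mathbf P^1$ (as already noted for $g\geq 2$, and equally for $g=0,1$); by Riemann--Hurwitz its branch locus consists of exactly $2g+2$ distinct points $\lambda_1,\dots,\lambda_{2g+2}\in\mathbf P^1$, so the normalisation of $C$ is the smooth hyperelliptic curve with affine model $y^2=\prod_i(x-\lambda_i)$, i.e.\ the normalisation of $\Gamma$. To upgrade the resulting birational equivalence $C\dashrightarrow\Gamma$ to a plane Cremona transformation, I would blow up $Q$ to reach $\mathbf F_1\to\mathbf P^1$, on which the strict transform of $C$ is a bisection of the ruling, and then resolve its singularities by elementary transformations of the ruled surface, each of which lifts to a Cremona transformation of $\mathbf P^2$ based at the centre of $\mathcal L$; on a suitable Hirzebruch surface the bisection then takes the normal form that blows down to $\Gamma$. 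The composite is a Cremona transformation carrying $C$ to $\Gamma$; this is the classical reduction of Coble~\cite{Cob} and Coolidge~\cite{Co} applied to the $g^1_2$ cut by $\mathcal L$, and it can also be extracted from the embedded resolution behind Theorem~\ref{theorem 1}.

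For the converse I would run this backwards and realise a given $M$ by induction on $\delta:=d-(g+2)=\sum_i a_i+\sum_j b_j\geq 0$. When $\delta=0$, conditions (1)--(4) force $s=n=n'=0$ and all subscripts to vanish, so $M=[\underline m_Q(C)]$ with $\underline m_Q(C)=(k_1;\dots;k_N)$ and $\sum_h k_h=d-2=g$; a plane curve of degree $g+2$ whose only singular point has $N$ branches of multiplicities $k_h$, each smooth after one blow-up and with distinct tangents, automatically has genus $g$, and such curves arise from $\Gamma$ by a chain of quadratic transformations adapted to $\mathcal L$ (equivalently, by running the first half in reverse), the free choices of centres producing the various admissible $\underline m_Q(C)$ and a projectivity of the base matching the $\lambda_i$'s. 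For the step $\delta\geq 1$ I would peel one unit off $M$: decrease one $b_j$ by $1$ (deleting the entry if it vanishes) or decrease one $a_i$ by $1$, obtaining $M'$ with $\delta'=\delta-1$ still satisfying (1)--(4); the induction hypothesis gives $C'$ of type $(d-1,d-3)$ with $\mathrm{Data}(C')=M'$, Cremona birational to $\Gamma$. A quadratic transformation based at the multiple point $Q'$ of $C'$, at one further point $p\in C'$, and at one point $r\notin C'$ — so the image has degree $2(d-1)-(d-3)-1=d$ — then produces $C$: the line $\overline{pr}$ contracts to the new multiple point $Q$, and choosing $p$, $r$ and the line $\overline{Q'r}$ suitably prescribes the singularity type at $Q$ (matching $\underline m_Q(C)$) together with the single extra singular point or infinitely near coincidence (matching the peeled-off unit), while a general choice of the remaining freedom keeps $C$ irreducible and free of spurious singularities or tangent lines at $Q$. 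Condition (1) is the degree--genus bookkeeping, preserved throughout; conditions (3)--(4) record exactly which one-step modifications of $\underline m_Q(C)$ such a transformation can effect; and Degtyarev's $2$-formula (Proposition~\ref{formula}) is the device for tracking $\mathrm{Data}$ through the process.

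The hard part is the converse, and within it three intertwined points. First, one must verify that (1)--(4) are stable under the peeling-off step, with the full generality of the paired entries $\binom{k_i}{k'_i}$ and the continued $2$-sequences; this is combinatorial but delicate. Second, one must show that each admissible one-step modification is actually realised by a quadratic transformation with the claimed, unambiguous effect on $\mathrm{Data}$ — this is where the local content of (3)--(4) enters, for instance that a branch of multiplicity $k_i$ at $Q$ can carry a continued $2$-sequence of length $a_i$ only for the $a_i$ permitted by (4), since otherwise $\psi$ would be forced to ramify to order exceeding $2$. Third, and I expect this to be the crux, one must make the ramification count behind condition (2) effective: each cusp $(2_{b_j})$ and each branch at $Q$ with $a_{s+j}>0$ lies over one of the $2g+2$ branch points of $\psi$, so the construction must spread the successive such features over distinct branch points, which stays possible precisely because $n'+s'\leq 2g+2$; by contrast a $\binom{1}{1}_{b_j}$ point consumes no branch point, which is why $n$ and $s$ are otherwise unbounded. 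With these in hand, the curve $C$ produced has $\mathrm{Data}(C)=M$ and is, by construction, Cremona birational to $\Gamma$.
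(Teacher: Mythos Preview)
Your forward direction is sound but takes a different route from the paper. You sketch the classical reduction via elementary transformations on the Hirzebruch surface obtained by blowing up $Q$; the paper instead writes down two explicit Cremona maps, $\Phi(x,y,z)=(xy^{d-2},y^{d-1},Fz+G)$ and $\Psi(x,y,z)=(xS,yS,y^{s}z)$, which carry the defining equation $Fz^2+2Gz+H=0$ first to $y^{2(d-2)}z^2=\Delta$ and then strip the even-order factors of $\Delta$. Your approach is more geometric; theirs is a direct computation once the equation is in hand.

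For the converse you and the paper share the same mechanism --- build $C$ up from $\Gamma$ by quadratic transformations centred at the multiple point --- but your single induction on $\delta=\sum a_i+\sum b_j$ has a genuine gap in the description of $M'$. You say ``decrease one $b_j$ by $1$ or decrease one $a_i$ by $1$'', yet since $\sum k_h+\sum k'_{h'}=d-2$ and the degree drops by one under the inverse step, some $k_h$ must also drop; the $M'$ you write down violates condition~(1). Worse, when $a_i$ is pinned by condition~(3) ($a_i=k_i$ if $k'_i>k_i$) or condition~(4) ($a_i=(k_i-1)/2$ if $k_i$ is odd), you cannot decrement $a_i$ in isolation: both $a_i$ and $k_i$, and sometimes the branch structure itself, must change simultaneously, and the admissible one-step moves are not the na\"ive ones. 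You flag this as ``the hard part'', but it is precisely the content of the converse, not a detail to be deferred. The paper handles it by abandoning a uniform induction: it treats the cuspidal classes (a), (b), (c) of the Corollary separately, inducting on $n'$ within each, and then builds the general $M$ from these, at every step invoking specific lemmas from \cite{SS} (their Lemma~1 and Lemma~2) that record exactly how the degenerate map $\varphi_c:(x,y,z)\mapsto(xy,y^2,x(z-cx))$ transforms each local branch at $O$ and at $A=(1,0,c)$. Your generic three-point quadratic transformation is close in spirit, but the precise bookkeeping of $\underline m_Q$ under it is not supplied. Your base case $\delta=0$ is also broader than the paper's --- they start from $M=[(g)]$ (and its variants $[(2g-1,2_{g-1})]$, $[(2(g+j),2_{g+2j})]$) built explicitly from $\Gamma_0$ by iterating $\varphi_c$, not from an arbitrary $(k_1;\ldots;k_N)$ --- so that step already requires the same branch-tracking lemmas.
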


In Sect.2, we review the system of the multiplicity sequences,
the 2--formula and quadratic Cremona transformations.
In Sect.3 (resp. Sect.4), we will prove Theorem~\ref{theorem 1}
(resp. Theorem~\ref{theorem 2}).
In Sect.5, we discuss the defining equations for those curves given in
Corollary.
\section{Preliminaries}\label{Prel} 
A cusp $P$ can be described by its multiplicity sequence
$\underline{m}_P=(m_0,m_1,m_2,\ldots)$.
For a multibranched singular point $P$ on $C$,
we introduced {\sl the system of the multiplicity sequences} of $P$. 

\begin{defn}[\cite{SS}]
Let $P\in C$ be a multibranched singular point,
having $r$ local branches $\gamma_1,\ldots,\gamma_r$.
Let $\underline{m}(\gamma_i)=(m_{i0}, m_{i1}, m_{i2}, \ldots)$ denote
the multiplicity sequences of the branches $\gamma_i$, respectively.
We define the \textsl{system of the multiplicity sequences},
which will be denoted by the same symbol $\underline{m}_P(C)$,
to be the combination of $\underline{m}(\gamma_i)$
with brackets indicating the coincidence of the centers of
the infinitely near points of the branches $\gamma_i$.
For instance, for the case in which $r=3$, we write it in the following form: 
\[
{
\!\left\{\!\left(\!\!\!
\def\arraystretch{1.2}
\begin{array}{l}
m_{1,0}\\
m_{2,0}\\
m_{3,0}
\end{array}
\!\!\!\right)
\ldots
\left(\!\!\!
\def\arraystretch{1.2}
\begin{array}{l}
m_{1,\rho }\\
m_{2,\rho }\\
m_{3,\rho }
\end{array}\!\!\!\right)\!\setlength{\tabcolsep}{3pt}\begin{array}{l}
\!\!\left(\!\!\!\!
\def\arraystretch{1.2}
\begin{array}{l}
\raisebox{1pt}{$m_{1,\rho+1}$}\\
\raisebox{1pt}{$m_{2,\rho+1}$}
\end{array}\!\!\!\!\right)\ldots
\left(\!\!\!\!
\def\arraystretch{1.2}
\begin{array}{l}
\raisebox{1pt}{$m_{1,\rho'}$}\\
\raisebox{1pt}{$m_{2,\rho'}$}
\end{array}\!\!\!\!\!\right)\
\!\!\!\!\renewcommand{\arraystretch}{1}\setlength{\tabcolsep}{3pt}\begin{array}{l}
\raisebox{1.6pt}{$m_{1,\rho'+1},\!\ldots\!,\!m_{1,s_1}$}\\
\raisebox{-0.6pt}{$m_{2,\rho'+1},\!\ldots\!,\!m_{2,s_2}$}
\end{array}\\
\,\, m_{3,\rho+1},\ldots,  m_{3,s_{3}}
\end{array}
\!\!\!\!\!\!\right\}}.
\def\arraystretch{1}
\]
We also use some simplifications such as 
\[
(2_a)=(\overbrace{2,\ldots,2}^a,1,1), \quad (2_0)=(1), \quad \binom{1}{1}_a=\overbrace{\binom{1}{1} \ldots \binom{1}{1}}^a.
\] 
\end{defn}

\begin{example}
We examine our notations for ADE singularities.
\[
\def\arraystretch{2}
\begin{array}{c|c|c|c|c|c|c|c}\
P&A_{2n}&A_{2n-1}&D_{2n-1}&D_{2n}&E_6&E_7&E_8\\\hline
\underline{m}_P(C)&(2_n)&{\binom{1}{1}_n}&
{\bigl\{
\binom{2}{1}
\def\arraystretch{1}
\!\!\begin{array}{l}
\raisebox{-1.8pt}{$\scriptstyle 2_{n-3}$}\\
{}
\end{array}\!\!\bigr\}}
&
\raisebox{-2pt}{$
\left\{\!\left(
\def\arraystretch{.8}
\!\!\!\begin{array}{c}
\scriptstyle 1\\
\scriptstyle 1\\
\scriptstyle 1
\end{array}\!\!\!\right)
\def\arraystretch{.8}
\!\!\!\!\begin{array}{l}
\left(\!\!\!\begin{array}{l}
\scriptstyle 1\\
\scriptstyle 1
\end{array}\!\!\!\right)_{\scriptscriptstyle n-2}\\
{}
\end{array}
\!\!\!\right\}$}
&(3)&\binom{2}{1}\binom{1}{1}&
(3,2)\end{array}
\def\arraystretch{1}
\]
\end{example}

\begin{example} The hyperelliptic curve $y^2=\prod_{i=1}^{2g+2}(x-\lambda_i)$ has one singularity $Q$ on the line at infinity with $\underline{m}_Q=\binom{g}{g}\binom{1}{1}_g$.
\end{example}

Let $C$ be an irreducible plane curve of type $(d,d-2)$. Let $Q\in C$ be the singular point with multiplicity $d-2$. Choosing homogeneous coordinates $(x,y,z)$ so that $Q=(0,0,1)$, the curve $C$ is defined by an equation:
\[
F(x,y)z^2+2G(x,y)z+H(x,y)=0,
\]
where $F$, $G$ and $H$ are homogeneous polynomials of degree $d-2$,  $d-1$ and $d$, respectively.  Set $\Delta=G^2-FH$. Let $t_1, \ldots, t_l\in{\mathbf{P}}^1$ be all the distinct roots of
the equation  $F(t)\Delta(t)=0$.
For each $i$, let $(p_i,q_i)=(\mathrm{ord}_{t_i}(F),\mathrm{ord}_{t_i}(\Delta))$,
where $\mathrm{ord}_{t_i}(F)$ (resp. $\mathrm{ord}_{t_i}(\Delta)$) is the multiplicity of the root $t_i$
of the equation $F(t)=0$ (resp. $\Delta(t_i)=0$). Set $T(C)=\{(p_1,q_1),\ldots,(p_l,q_l)\}$.  This unordered $l$--tuple $T(C)$ is called the \textsl{2--formula} of $C$ (Degtyarev \cite{D}). 
We remark that $T(C)$ does not depend on the choice of the coordinates $(x,y,z)$ with $Q=(0,0,1)$. 
\begin{lem}\label{2FL} The 2--formula $T(C)$ satisfies the following properties:
\begin{enumerate} 
\item[\rm{(i)}] $\displaystyle\sum_{i=1}^l q_i = 2\sum_{i=1}^l p_i+2$, \label{Def_ITF1}
\item[\rm{(ii)}] $p_i=q_i$ or $\min\{p_i,q_i\}$ is even for each $i$, 
\item[\rm{(iii)}] there exists a pair $(p_i,q_i)$ such that $q_i$ is an odd number.\label{Def_ITF3}
\end{enumerate}
\end{lem}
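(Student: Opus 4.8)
The plan is to read off the three properties directly from the geometry of the defining equation
\[
F(x,y)z^2+2G(x,y)z+H(x,y)=0,
\]
viewing $C$ as a double cover of $\mathbf{P}^1$. First I would normalize: since $C$ is irreducible, $F$ and $\Delta=G^2-FH$ cannot share a common factor that would force reducibility, and in particular $\Delta\not\equiv 0$. The projection $\pi$ from $Q=(0,0,1)$ expresses $C$ birationally as the curve $u^2=\Delta(t)/F(t)^2$ (completing the square: $u=Fz+G$), so the branch locus of the induced double cover $C\to\mathbf{P}^1$ is governed by the divisor of $\Delta/F^2$, i.e. by the points where $\operatorname{ord}_{t_i}(\Delta)-2\operatorname{ord}_{t_i}(F)=q_i-2p_i$ is odd.

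For (i), the key observation is a degree count. Since $\deg F=d-2$ and $\deg G=d-1$, one has $\deg \Delta=\deg(G^2-FH)$; the leading term of $G^2$ has degree $2d-2$, while $FH$ has degree $2d-2$ as well, so a priori cancellation could occur, but the genericity of the coordinate choice (or a direct argument: $Q$ has multiplicity exactly $d-2$, so $F\not\equiv 0$ and the tangent cone at $Q$ is $F=0$) pins down $\deg\Delta=2d-2=2\deg F+2$. Then $\sum q_i=\deg\Delta$ and $\sum p_i=\deg F$ as sums over all roots in $\mathbf{P}^1$ — here I must be careful to include the point at infinity of the line $z=0$ in the bookkeeping if the degrees of $F$ and $\Delta$ as binary forms are used consistently — giving $\sum q_i=2\sum p_i+2$ exactly.

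For (ii), at a root $t_i$ the local picture of $C$ over $t_i$ is determined by the equation $Fz^2+2Gz+H=0$ in $z$; writing $F=f t^{p_i}+\cdots$, $\Delta=\delta t^{q_i}+\cdots$ locally, the two branches of $C$ over $t_i$ coincide (tangentially, i.e. the fiber is a single point) precisely in certain arithmetic cases. The dichotomy "$p_i=q_i$ or $\min\{p_i,q_i\}$ even" should come from analyzing $G^2=\Delta+FH$: if $p_i<q_i$ then $\operatorname{ord}_{t_i}(FH)=\operatorname{ord}_{t_i}(G^2)=2\operatorname{ord}_{t_i}(G)$ is even, and $\operatorname{ord}_{t_i}(FH)\geq p_i$ with equality unless $t_i$ is also a root of $H$; a short case analysis (comparing $\operatorname{ord}_{t_i}(F)$, $\operatorname{ord}_{t_i}(H)$, and using $2\operatorname{ord}(G)=\min$ when the orders of $\Delta$ and $FH$ differ) yields that $p_i=\min\{p_i,q_i\}$ is even when $p_i<q_i$, and symmetrically $q_i=\min\{p_i,q_i\}$ is even when $q_i<p_i$ since then $\operatorname{ord}(G^2)=q_i$ forces $q_i$ even. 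For (iii), one uses (i): $\sum q_i=2\sum p_i+2$ is even, but also $\sum (q_i-2p_i)\equiv\sum q_i\pmod 2$, and the branch divisor of the double cover $C\to\mathbf{P}^1$ must be nonempty and of even degree (it is the ramification divisor of a degree-two map from a curve of genus $g\geq 0$, which is nonconstant), hence at least one $q_i-2p_i$, and therefore at least one $q_i$, is odd; if all $q_i$ were even the cover would be unramified over the affine line and the curve $C$ would be reducible or rational in a way incompatible with it being an irreducible plane curve with a genuine $(d-2)$-fold point.

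The main obstacle I expect is (ii): making the order-comparison argument airtight, because $G$ can vanish to high order and the identity $G^2-\Delta=FH$ only controls $\min\{\operatorname{ord}(\Delta),\operatorname{ord}(FH)\}$ when those two orders differ, so one genuinely has to split into the cases $\operatorname{ord}_{t_i}(\Delta)\lessgtr\operatorname{ord}_{t_i}(FH)$ and $=$, and within the first two cases further track whether $p_i\lessgtr\operatorname{ord}_{t_i}(H)$. The parity bookkeeping for (i) and (iii) at the point $[0:1:0]$ (whether it lies on $C$, on $F=0$, on $\Delta=0$) is the other place where care is needed, but it is routine once one fixes the convention that $T(C)$ records orders of the binary forms on all of $\mathbf{P}^1$.
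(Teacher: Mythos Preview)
Your approach is essentially the paper's, so I will just flag where your sketch can be tightened to match its execution.

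For (i), the cancellation worry is a non--issue: $G^2$ and $FH$ are \emph{homogeneous} binary forms of degree $2d-2$, so $\Delta=G^2-FH$ is either identically zero (impossible, since then $F(Fz^2+2Gz+H)=(Fz+G)^2$ and $C$ is reducible) or homogeneous of degree exactly $2d-2$. Hence $\sum q_i=2d-2$ and $\sum p_i=d-2$ with no bookkeeping at infinity needed.

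For (ii), your case split is right but you have not isolated the one nontrivial input. In the case $0<p_i<q_i$ the identity $G^2=\Delta+FH$ gives $2m\ge p_i$ (else the left side has lower order), hence $m>0$; now \emph{irreducibility} forces $\mathrm{ord}_{t_i}(H)=0$, since $t_i\mid F$ and $t_i\mid G$ already. This kills the subcase you were worried about (``unless $t_i$ is also a root of $H$'') and gives $\mathrm{ord}_{t_i}(FH)=p_i$ directly, whence $2m=p_i$. The case $p_i>q_i$ is the easy one: $\mathrm{ord}_{t_i}(FH)\ge p_i>q_i$, so $2m=q_i$.

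For (iii), your geometric statement (an unramified double cover of $\mathbf{P}^1$ is disconnected) is exactly the paper's algebraic one: if every $q_i$ is even then $\Delta=\Delta_0^2$ and $F(Fz^2+2Gz+H)=(Fz+G+\Delta_0)(Fz+G-\Delta_0)$, contradicting irreducibility. The detour through $q_i-2p_i$ is harmless (same parity as $q_i$) but unnecessary.
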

\begin{proof} (i) By definition, $\sum_{i=1}^l p_i=d-2$ and $\sum_{i=1}^l q_i=2d-2$. (ii) Suppose that $p_i\ne q_i$. We may assume $t_i=(0,1)$.
We can write $F$, $G$ and $\Delta$ as $F=x^{p_i}F_0$, $\Delta=x^{q_i}\Delta_0$ and $G=x^mG_0$, respectively,
where $m=\mathrm{ord}_{t_i}(G)$. If $p_i>q_i$, then $x^{2m}G_0^2=x^{q_i}(\Delta_0+x^{p_i-q_i}F_0H)$.
Thus $q_i=2m$. If $0<p_i<q_i$, then we get $x^{2m}G_0^2=x^{p_i}(x^{q_i-p_i}\Delta_0+F_0H)$, which implies that $2m\ge p_i>0$. We have $x\,{\not\kern0pt|}\,\,H$, since $C$ is irreducible.
Hence $p_i=2m$. (iii) Suppose that all $q_i$'s are even. Then we can write as $\Delta=\Delta_0^2$. We have $F(Fz^2+2Gz+H)=(Fz+G+\Delta_0)(Fz+G-\Delta_0)$.
Since $\deg(Fz+G\pm\Delta_0)=d-1$, we infer that 
$Fz^2+2Gz+H$ is reducible. This is a contradiction.
\end{proof}
\begin{remark}\label{criterion} We note that $P(x,y,z)=Fz^2+2Gz+H$ is irreducible if (a) $\mathrm{GCD}(F,G,H)=1$, and if (b) the property (iii) holds. Indeed, under the assumption (a), if $P$ is reducible, then $P=(Az+B)(Cz+D)$ with $A,B,C,D\in {\mathbf{C}}[x,y]$. But, in this case, $4\Delta=(AD-BC)^2$, which contradicts the property~(iii).  
\end{remark}
\begin{example}
Let $C$ be the quartic curve $x^2y^2+y^2z^2+z^2x^2-2xyz(x+y+z)=0$.
We have $T(C)=\{(2,0),(0,3),(0,3)\}$.
\end{example}

The (degenerate) quadratic Cremona transformation 
\[
\varphi _{c}:(x,y,z)\longrightarrow (xy,y^{2},x(z-cx)) \quad (c \in \mathbf{C})
\]
played an important role in \cite{FZ2,SS}. We find
that $\varphi _{c}^{-1}(x,y,z)=(x^{2},xy,yz+cx^{2})$. We use the notations: 
\[
l:x=0, \,\,t:y=0, \,\,O=(0,0,1), A=(1,0,c), B=(0,1,0).
\]
Note that $\varphi_c(l\setminus \{O\})=B$ and $\varphi_c(t\setminus \{O,A\})=O$. 
 
Let $C$ be an irreducible plane curve of type $(d,d-2)$ with $d\geq 4$.
Suppose the singular point $Q\in C$ of multiplicity $d-2$ has coordinates $O$.
We have seen  in \cite{SS} that the strict transform $C'=\varphi_c(C)$
is an irreducible plane curve of type $(d',d'-2)$ for some $d'$.
In \cite{SS,ST}, by analyzing how a local branch $\gamma$ at
$P\in \textrm{Sing}(C)$ is transformed by $\varphi_c$,
we described Data$[C']$ from Data$[C]$.     

\section{Proof of Theorem 1}
(i)
We easily see that $P\in \textrm{Sing}(C)\setminus \{Q\}$ is a double point,
because $LC=(d-2)Q+2P$, where $L$ is the line passing through $P,Q$.
Let $\pi:{\tilde {\mathbf{P}}}^2\to {\mathbf{P}}^2$ be the blowing--up at $Q$.
Let $E$ denote the exceptional curve.
Take a line $L$ passing through $Q$.
Let $C'$ (resp. $L'$) be the strict transform of $C$ (resp. $L$).
We have $C'L'=2$.
It follows that $P\in \textrm{Sing}(C')\cap E$ is also a double point.
Thus, Data$(C)$ has the shape as in Theorem~\ref{theorem 1}.
Clearly, $\sum k_h+\sum k'_{h'}=\textrm{mult}_Q(C)=d-2$.
The second part of the condition (1) follows from the genus formula.
The condition~(2) follows from the Hurwitz formula applied to
the double covering ${\tilde C}\to {\mathbf{P}}^1$,
which corresponds to the projection of $C$ from $Q$,
where the ${\tilde C}$ is the non--singular model of $C$.
For the proof of the conditions~(3), (4), we refer to \cite{SS}.
We will give an alternative, direct proof in Proposition~\ref{formula}. 

\bigskip
\noindent
(ii) Let $F(x,y)z^2+2G(x,y)z+H(x,y)=0$ be the defining equation of $C$ 
as in Sect.~2. Let $T(C)$ be the 2--formula of $C$. Setting 
\[
T'(C)=\{(p,q)\in T(C)\,|\, p>0 \textrm{ or } q\geq 2\},
\]
we renumber the pairs $(p_i,q_i)\in T'(C)$ in the following way:
\begin{enumerate}
\item[\rm{(1)}] $p_i>0$, $q_i>0$ and $q_i$ is even for $i=1,\ldots s$, 
\item[\rm{(2)}] either $p_i>0$, $q_i>0$ and $q_i$ is odd, or $p_i>0, q_i=0$ for $i=s+1, \ldots, N$,
\item[\rm{(3)}] $p_i=0$, $q_i>0$ and $q_i$ is even, for $i=N+1,\ldots,N+n$,
\item[\rm{(4)}] $p_i=0$, $q_i\geq 3$ and $q_i$ is odd, for $i=N+n+1,\ldots,N+n+n'$.
\end{enumerate}

\bigskip
\begin{proposition}\label{formula} Set 
\begin{enumerate}
\item[\rm{(1)}] for $i=1,\ldots s$, 

\noindent
$\begin{cases}
k_i=k'_i=p_i/2, a_i=q_i/2 &\textrm{ if } p_i\leq q_i,\\
k_i=q_i/2, k'_i=p_i-q_i/2, a_i=q_i/2 &\textrm{ if } p_i>q_i,
\end{cases}$

\item[\rm{(2)}] for $i=s+1,\ldots N$,

\noindent
$\begin{cases}
k_i=p_i, a_i=(q_i-1)/2 &\textrm{ if } q_i>0,\\
k_i=p_i, a_i=0 &\textrm{ if } q_i=0,
\end{cases}$
\item[\rm{(3)}] $b_j=q_{\raisebox{-2.5pt}{{\footnotesize N+j}}}/2$, for $j=1,\ldots,n$, 
\item[\rm{(4)}] $b_j=(q_{\raisebox{-2.5pt}{{\footnotesize N+j}}}-1)/2$. for $j=n+1,\ldots,n+n'$. 
\end{enumerate}
Then Data$(C)$ is given as in Theorem~\ref{theorem 1}, (i).
\end{proposition}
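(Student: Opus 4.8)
The plan is to establish the correspondence $T(C)\leftrightarrow\mathrm{Data}(C)$ one root at a time: for each root $t_i$ of $F(t)\Delta(t)=0$ I would read off, from the pair $(p_i,q_i)$ alone, the local contribution to $\mathrm{Data}(C)$, and then sum over the four groups of the renumbering of $T'(C)$. Since $T(C)$ and $\mathrm{Data}(C)$ do not depend on the choice of coordinates with $Q=(0,0,1)$, I would first normalize the coordinates so that the $N$ tangent lines to $C$ at $Q$ all have finite slope and each meets the line at infinity $\{z=0\}$ off $C$; a general choice of the line at infinity achieves this, which is legitimate precisely because both invariants are coordinate-free. In the chart $z=1$ the curve is $F(u,v)+2G(u,v)+H(u,v)=0$ with tangent cone $F=0$ at $Q$, so the $N$ distinct roots of $F$ are exactly the $N$ tangent directions, while a root of $\Delta$ with $F\neq 0$ of order $\geq 2$ is a singular point of $C$ away from $Q$ (a simple root of $\Delta$ there being a smooth point of $C$, hence dropped in $T'(C)$). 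This identifies all the local pieces that must occur; condition (1) is immediate from $\sum p_i=\deg F=d-2$, and conditions (1), (2) otherwise follow from Lemma~\ref{2FL}(i) and the genus and Hurwitz computations of the proof of part (i).

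For a root with $p_i=0$ I would argue directly over the direction $t_i$: solving for $z$ gives $z=(-G\pm\sqrt{\Delta})/F$ with $F(t_i)\neq 0$. If $q_i=2b$ is even, $\sqrt{\Delta}$ is $\epsilon^{b}$ times a unit in the local parameter $\epsilon$, so the two values of $z$ are two smooth branches with contact $b$, i.e. the tacnode $\binom{1}{1}_{b}$; if $q_i$ is odd (hence $\geq 3$), the branch point of the square root forces a single branch, analytically isomorphic to $A_{q_i-1}$, with multiplicity sequence $(2_{(q_i-1)/2})$. These give the $b_j$.

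For a root with $p_i>0$ I would blow up $Q$. In the chart with exceptional curve $E=\{u=0\}$ and $v=uw$, the strict transform $C'$ is $F(1,w)+2uG(1,w)+u^2H(1,w)=0$, again quadratic in $u$, with discriminant $4\Delta(1,w)$ of order $q_i$ at the point $P_i\in E$, while $\mathrm{ord}\,F(1,w)=p_i$ and $H(1,w_0)\neq 0$ by the normalization. Solving, $u=(-G(1,w)\pm\sqrt{\Delta(1,w)})/H(1,w)$; a short order estimate, using Lemma~\ref{2FL}(ii) to pin down $\mathrm{ord}\,G(1,w)$ when $p_i\neq q_i$, shows: if $q_i$ is even, $C'$ has two branches at $P_i$, meeting $E$ there with multiplicities $\{p_i/2,p_i/2\}$ if $p_i\leq q_i$ and $\{q_i/2,\,p_i-q_i/2\}$ if $p_i>q_i$, and meeting each other with contact $q_i/2$; if $q_i$ is odd or $q_i=0$, $C'$ has a single branch at $P_i$, meeting $E$ with multiplicity $p_i$, which is analytically isomorphic to $A_{q_i-1}$ (a smooth point when $q_i\leq 1$). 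Blowing $E$ back down, I would use the standard facts that the intersection number of a branch of $C'$ with $E$ at $P_i$ equals the multiplicity at $Q$ of the corresponding branch of $C$, and that the contact of two branches of $C'$ at $P_i$ equals the number of infinitely near points of $C$ at $Q$, beyond $Q$ itself, shared by the two corresponding branches. In the bracket notation this yields exactly the values $k_i,k_i',a_i$ of (1) and (2): in particular, for $q_i$ odd the branch of $C$ at $Q$ has multiplicity sequence $(p_i,2_{(q_i-1)/2})$, i.e. contributes the row $k_i=p_i$ with label $2_{a_i}$, $a_i=(q_i-1)/2$.

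Assembling these contributions over the four groups of $T'(C)$, the system has the shape of Theorem~\ref{theorem 1}(i), and conditions (3) and (4) are automatic consequences of the formulas just derived — for instance $p_i\leq q_i$ forces $a_i\geq k_i$ and $p_i>q_i$ forces $a_i=k_i$, which is condition (3), and the analogous inequality for the single-branch rows is condition (4). The step I expect to require the most care is the case $p_i>0$ with $q_i$ odd: one must check that the single branch of $C'$ at $P_i$ really meets $E$ with multiplicity exactly $p_i$ (not less, despite the tangency with $E$), and that it is analytically isomorphic to $A_{q_i-1}$, which means identifying and removing the power-series part of its Puiseux expansion after the change of parameter $\epsilon=\tau^2$; one must also verify that, in the case $p_i\leq q_i$, the tangency of the branches of $C'$ to $E$ introduces no further infinitely near structure, and dispose of the degenerate possibilities $H(1,w_0)=0$ or a vertical tangent at $Q$ (excluded by the coordinate normalization).
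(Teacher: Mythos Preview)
Your proposal is correct and follows essentially the same route as the paper: split on $p_i=0$ versus $p_i>0$, reduce the local equation near the relevant point to $u^2=v^{q_i}$ (you via the quadratic formula, the paper via completing the square after multiplying by $F$ or $H$), and for $p_i>0$ blow up $Q$ and read off $(k_i,k_i')$ from the intersection numbers of the branches of $C'$ with $E$, using the order computation for $G$ from Lemma~\ref{2FL}. The only cosmetic differences are that the paper normalizes coordinates one root at a time and obtains $H(t_i)\neq 0$ directly from irreducibility (since $p_i,q_i>0$ force $G(t_i)=0$, so $H(t_i)=0$ would make $L_i$ a component of $C$), rather than from a global choice of the line at infinity; and the paper states the converse (every $P\in\mathrm{Sing}(C)\setminus\{Q\}$ and every $P\in C'\cap E$ arises from some pair in $T'(C)$) a bit more explicitly than your sketch does.
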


\begin{proof} Take $(p_i,q_i)\in T'(C)$. Write $t_i$ as $t_i=(\alpha_i,\beta_i)$.
Let $L_i$ be the line $\beta_ix=\alpha_iy$. By arranging the coordinates, we may assume $(\alpha_i,\beta_i)=(0,1)$.
Write $F$, $G$ and $\Delta$ as $F=x^{p_i}F_0$, $G=x^mG_0$ and $\Delta=x^{q_i}\Delta_0$, where $m=\mathrm{ord}_{t_i}(G)$. 

We first consider the case in which $p_i=0$. Since $\Delta(t_i)=0$, we have 
\[
F(t_i)z^2+2G(t_i)z+H(t_i)=F(t_i)(z+G(t_i)/F(t_i))^2.
\]
It follows that $CL_i=(d-2)Q+2P$, where $P=(0,1,-G(t_i)/F(t_i))$.
Let $U$ be a neighbourhood of $P$ such that $y\ne 0$ and $F(x,y)\ne0$ for all $(x,y,z)\in U$. We use the affine coordinates $(\overline{x},\overline{z})=(x/y,z/y)$. We have
\[
\begin{split}
&F(x,y)(F(x,y)z^2+2G(x,y)z+H(x,y))\\&\qquad \qquad \qquad =y^{2d-2}((F(\overline{x},1)\overline{z}+G(\overline{x},1))^2-\Delta(\overline{x},1)).
\end{split}
\]
Thus $C$ is defined by the equation
$(F(\overline{x},1)\overline{z}+G(\overline{x},1))^2=\Delta(\overline{x},1)$
on $U$. Letting $u=F(\overline{x},1)\overline{z}+G(\overline{x},1)$
and $v=(\sqrt[q_i]{\Delta_0(\overline{x},1)})\overline{x}$,
$C$ is defined by the equation $u^2=v^{q_i}$ around $P$.
Thus $P\in{\mathop{\rm Sing}\nolimits} (C)\setminus\{Q\}$ if $q_i\ge 2$.  In this case, we have 
\[
\underline{m}_P(C)=\begin{cases} \binom{1}{1}_{q_i/2} &\textrm{ if } q_i  \textrm{ is even}, \\
(2_{(q_i-1)/2}) & \textrm{ if } q_i \textrm{ is odd},
\end{cases}
\]
which gives the assertions (3), (4). 

Conversely, take $P\in{\mathop{\rm Sing}\nolimits} (C)\setminus\{Q\}$. Let $L$ be the line passing through $P, Q$. Write $L: \beta x=\alpha y$.
Since $CL=(d-2)Q+2P$, we have $F(\alpha,\beta)\ne0$
and $\Delta(\alpha,\beta)=0$. For $(\alpha,\beta)\in {\mathbf{P}}^1$,
we find a pair $(0,q)\in T(C)$.
We see from the above argument that $C$ is defined by the equation $u^2=v^q$ near $P$.
Thus $q\ge 2$.

We now consider the case in which $p_i>0$.
Let $\pi:{\tilde {\mathbf{P}}}^2\rightarrow {\mathbf{P}}^2$ be
the blowing--up at $Q$ and $E$ the exceptional curve of $\pi$.
We use the affine coordinates $(\overline{x},\overline{y})=(x/z,y/z)$ of $U:=\{(x,y,z)\in{\mathbf{P}}^2\,\,|\,\,z\ne0\}$. 
Put $V=\pi^{-1}(U)$. There exist an open cover $V=V_1\cup V_2$ ($V_j\cong{\mathbf{C}}^2$) with standard coordinates $(u_j,v_j)$ of $V_j$ such that
$\pi|_{V_1}:V_1\ni(u_1,v_1)\mapsto(u_1v_1,u_1)$ and
$\pi|_{V_2}:V_2\ni(u_2,v_2)\mapsto(u_2,u_2v_2)$. Note that $E$ is defined by $u_j=0$ on $V_j$.
The strict transform $L_i'$ of $L_i$
is defined by $v_1=0$ on $V_1$. Let $P$ be the unique point $E\cap L_i'$.
We have $P=(0,0)$ on $V_1$. The strict transform $C'$ of $C$ is defined by the equation:
$F(v_1,1)+2G(v_1,1)u_1+H(v_1,1)u_1^2=0$ on $V_1$.
By the definition of $p_i$ and $m$, the curve $C'$ is defined by the equation:
$F_0v_1^{p_i}+2G_0v_1^mu_1+Hu_1^2=0$.
In particular, we have $(C'E)_P=p_i$.
If $q_i=0$, then we must have $m=0$ (See the proof of Lemma~\ref{2FL}).
Hence $C'$ is smooth at $P$.
If $q_i>0$, then we have $m>0$ (Cf. the proof of Lemma~\ref{2FL}).
Since $C$ is irreducible, we see that $H(t_i)\ne0$.
We have $H(F+2G_0v_1^mu_1+Hu_1^2)=(Hu_1+G_0v_1^m)^2-\Delta$.
This means that $C'$ is defined by the equation:
\[
(H(v_1,1)u_1+G_0(v_1,1)v_1^m)^2-\Delta(v_1,1)=0
\]
in a neighborhood of $P$.
Letting $u=H(v_1,1)u_1+G_0(v_1,1)v_1^m$ and $v=(\sqrt[q_i]{\Delta_0(v_1,1)})v_1$, $C'$ is defined by the equation $u^2=v^{q_i}$ around $P$.
We have
\[
\underline{m}_P(C')=\begin{cases} \binom{1}{1}_{q_i/2} &\textrm{ if } q_i  \textrm{ is even}, \\
(2_{(q_i-1)/2}) & \textrm{ if } q_i \textrm{ is odd},\\
(1) &\textrm{ if } q_i=0,
\end{cases}
\]
which gives the values of $a_i$ in (1), (2).
We prove the remaining assertions in (1).
If $q_i$ is even, then $C'$ has two branches $\gamma_{+}, \gamma_{-}$ at $P$ defined by 
\[
H(v_1,1)u_1+G_0(v_1,1)v_1^m\pm v_1^{q_i/2}\sqrt{\Delta_0(v_1,1)}=0.
\]
In case $p_i>q_i$, we have $m=q_i/2$ (See the proof of Lemma~\ref{2FL}).
We infer that one of the intersection numbers
$(E\gamma_{+})_P$ and $(E\gamma_{-})_P$ is equal to $q_i/2$.
The other one must be equal to $p_i-q_i/2$, because $(EC')_P=p_i$.
In case $p_i\leq q_i$, we have $m\geq p_i/2$
(Cf. the proof of Lemma~\ref{2FL}).
Thus $(E\gamma_{\pm})_P\geq p_i/2$, hence $(E\gamma_{\pm})_P=p_i/2$.
Consequently, we obtain the pair $(k_i,k'_i)$.

Conversely, take $P\in C'\cap E$. We assume $P\in V_1$.
Write the coordinates of $P$ as $P=(0,\beta)$.
The equation $F(\beta,1)+2G(\beta,1)u_1+H(\beta,1)u_1^2=0$
has the solution $u_1=0$ as $C'$ passes through $P$.
Thus $F(\beta,1)=0$. For $(\beta,1)\in {\mathbf{P}}^1$, we find a pair $(p,q)\in T(C)$ with $p>0$. 
\end{proof}

\begin{remark}
For $i=s+1,\ldots,N$, if $(k_i,a_i)=(1,0)$,
then we have either $(p_i,q_i)=(1,1)$ or $(1,0)$.
The case $(p_i,q_i)=(1,0)$ occurs if and only if the line $L_i$ is a flex--tangent line to the corresponding branch at $Q$. 
\end{remark}

\section{Proof of Theorem 2}
Let $C$ be given by the equation (See Sect.~\ref{Prel}):
\[
F(x,y)z^2+2G(x,y)z+H(x,y)=0.
\]
Put $\Delta=G^2-FH$.
Via linear coordinates change of $x$ and $y$,
we may assume that $y\,{\not\kern0pt|}\,\,F\Delta$.
We then define a Cremona transformation (Cf. \cite{Co}, Book II, Chap.V):
\[
\Phi(x,y,z)=(xy^{d-2},y^{d-1}, Fz+G),
\]
We find that $\Phi^{-1}(x,y,z)=(xF,yF, y^{d-2}z-G)$. We see easily that the strict transform $C'=\Phi(C)$ is defined by the  equation:
\[
y^{2(d-2)}z^2=\Delta.
\]
 Write $\Delta=\prod_{i=1}^k (x-\lambda_iy)^{q_i}$,
where the $\lambda_i$'s are distinct.
Renumber $q_i$'s so that $q_i$'s are odd for $i=1,\ldots,l$ and $q_i$'s are even for $i=l+1,\ldots,k$.
Letting $s_i=[q_i/2]$ for $i=1,\ldots,k$,
we put $S=\prod_{i=1}^k (x-\lambda_iy)^{s_i}$ and $s=\sum_{i=1}^k s_i$.
Note that $2d-2=\sum_{i=1}^kq_i=2s+l$. We next define a Cremona transformation:
\[
\Psi(x,y,z)=(xS,yS, y^{s}z),
\]
We find that $\Psi^{-1}(x,y,z)=(xy^s,y^{s+1}, Sz)$. We see that $\Gamma'=\Psi(C')$ is defined by the  equation:
\[
y^{2(d-2)}z^2=\prod_{i=1}^{l}(x-\lambda_iy)
\]
We see that $l=2g+2$ and $g=d-s-2$.
Take a projective transformation: $\iota:(x,y,z)\to (x,z,y)$.
Finally, the image $\Gamma=\iota(\Gamma')$ has the affine equation:
\[
y^2=\prod_{i=1}^{2g+2}(x-\lambda_i).
\]

\bigskip
We now prove the latter half of Theorem~\ref{theorem 2}.
We start with the curve $\Gamma$ and a collection of systems of multiplicity sequences:
\[
M=\left[m,\binom{1}{1}_{b_{1}},\ldots,\binom{1}{1}_{b_{n}}, (2_{b_{n+1}}),\ldots, (2_{b_{n+n'}})\right] , 
\]
where the $m$ is the system of the multiplicity sequences of
the singular point with multiplicity $d-2$.
Let $r(M)$, $N(M)$ denote the number of the branches and the number of
the different tangent lines of $m$.
We have to construct an irreducible plane curve of type $(d,d-2)$ with
Data$(C)=M$. In \cite{SS},
we considered the case in which $g=0$.
We here assume that $g\geq 1$. We follow the arguments in \cite{SS}. 

First we deal with the cuspidal case given in Corollary of Theorem~\ref{theorem 1}. See also Proposition~\ref{equations}.

\medskip
\noindent
Case (a):
$M=[(k),(2_{b_{1}}),\ldots, (2_{b_{n'}})]$,
where $k=g+\sum b_i$.  We use the induction on $n'$.
(i) $M=[(g)]$. Interchanging coordinates, we start with the curve:
\[
\Gamma_0: x^{2g}z^2=\prod_{i=1}^{2g+2}(y-\lambda_ix).
\]
After a linear change of coordinates,
we may assume that $c=\prod_{i=1}^{2g+2}(-\lambda_i)\neq 0$.
Letting $c_1=\sqrt{c}$,
we have $\Gamma_0t=(2g)O+A_1+A'_1$,
where $A_1=(1,0,c_1), A'_1=(1,0,-c_1)$. Let $\Gamma_1$ be
the strict transform of $\Gamma_0$ via $\varphi_{c_1}$.
Using Lemma~1, (a) and Lemma 2, (e)* in \cite{SS},
we see that $\Gamma_1t=(2g-1)O+A_2$. Write $A_2=(1,0,c_2)$.
Let $\Gamma_2$ be the strict transform of $\Gamma_1$ via $\varphi_{c_2}$.
In this way, we successively choose $c_1,\ldots,c_g$.
It turns out that Data$(\Gamma_g)=[(g)]$.
(ii) Suppose we have constructed $C_0$ with
Data$(C_0)=[(k_0),(2_{b_{1}}),\ldots, (2_{b_{n'-1}})]$,
where $k_0=g+\sum_{i=1}^{n'-1} b_i$.
After a suitable change of coordinates,
we may assume $C_0l=k_0O+2B_{1}$ and $C_0t=(k_0+1)O+A_{1}$.
Note that the double covering ${\tilde C}\to \mathbf{P}^1$ defined
through the projection from $O$ to a line, must have $2g+2$ branch points.
Since $n'-1<2g+2$, we see that a line passing through $O$ is tangent to
$C_0$ at a smooth point $ B_{1}$.
Write $A_{1}=\left( 1,0,c_{1}\right) $.
Let $C_1$ be the strict transform of $C_0$ via $\varphi_{c_{1}}$.
We have  $C_1l=(k_0+1)O+2B$ and $C_1t=(k_0+2)O+A_2$.
Write $A_{2}=\left( 1,0,c_{2}\right) $.
Let $C_2$ be the strict transform of $C_1$ via $\varphi _{c_{2}}$.
We have again $C_2t=(k_0+3)O+A_3$.
Repeating in this way, we successively choose $c_1,\ldots,c_{b_{n'}}$
and define $C_1,\ldots,C_{b_{n'}}$.
Then, the curve $C=C_{b_{n'}}$ has the desired property.

\medskip
\noindent
Case (b):
$M=[(2k+1,2_{k}),(2_{b_{1}}),\ldots, (2_{b_{n'}})]$,
where $k+1=g+\sum b_i$.
As in Case~(a), we can similarly prove this case.
For the first step: $M=[(2g-1,2_{g-1})]$,
it suffices to arrange coordinates so that
$\Gamma_0t=gO+2A_1$ with $A_1=(1,0,0)$.
Put $c_1=0$ and choose $c_2,\ldots, c_g$ arbitrarily.
Then we obtain Data$(\Gamma_{g})=M$ (Cf. Lemma~1, (b) and Lemma~2,~(e)* in \cite{SS}).

\medskip
\noindent
Case (c):
$M=[(2k,2_{k+j}),(2_{b_{1}}),\ldots, (2_{b_{n'}})]$, where $k=g+j+\sum b_i$.
We also use the induction on $n'$ as in Case~(a).
For  the first step: $M=[(2(g+j),2_{g+2j})]$,
we start with a curve $C_0$ with Data$(C_0)=[(g+j), (2_j)]$ constructed
in Case~(a).
We again arrange coordinates so that $C_0t=(g+j)O+2R$,
where $\underline{m}_R(C_0)=(2_j)$ and $R=(1,0,a)$.
Choose $c_1\neq a$ and $c_2,\ldots, c_{g+j}$ arbitrarily.
Then we have Data$(C_{g+j})=M$ (Cf. Lemma~1, (a)*, (c) in \cite{SS}).   

\medskip
Starting with the cuspidal case,
we can prove the general case in a similar manner to that in \cite{SS}.
We have three subcases:
I.~$N(M)=r(M)=1$, II.~$N(M)=1, r(M)=2$, III.~$N(M)\geq 2$.
Here, we only give a proof for
$M=[(k), \binom{1}{1}_{b_1},\ldots,\binom{1}{1}_{b_n}, (2_{b_{n+1}}),\ldots,(2_{b_{n+n'}})]$,
where $k=g+\sum_{j=1}^{n+n'} b_j$, which is one of the remaining cases in I.
We use the induction on $n$.
(i) We constructed a cuspidal curve $C$ with
Data$(C)=[(k), (2_{b_{n+1}}),\ldots,(2_{b_{n+n'}})]$.
(ii) Suppose we have already constructed $C_0$ with 
\[
\mathrm{Data}(C_0)=[(k_0), \binom{1}{1}_{b_1},\ldots,\binom{1}{1}_{b_{n-1}}, (2_{b_{n+1}}),\ldots,(2_{b_{n+n'}})],
\]
where $k_0=g+\sum_{j=1}^{n-1}b_j+\sum_{j=n+1}^{n+n'}b_j$.
By arranging coordinates,
we have $C_0l=k_0O+B_1+B'_1$ and $C_0t=(k_0+1)O+A_1$.
Letting $A_1=(1,0,c_1)$, the strict transform $C_1$ of $C_0$ via $\varphi_{c_1}$ has the property $C_0t=(k_0+2)O+A_2$. 
Write $A_2=(1,0,c_2)$. 
We successively choose $c_2,\ldots,c_{b_n}$ in this way. 
Then the strict transform $C$ of $C_0$ via $\varphi_{c_{b_n}}\circ\cdots\circ \varphi_{c_1}$ has the desired property (Cf. Lemma~1,~(d) and Lemma~2,~(tn) in \cite{SS}).
In particular, $C$ contains a tacnode  $\binom{1}{1}_{b_n}$ at $B=(0,1,0)$.

\begin{remark}
Note that in Coolidge \cite{Co} (Book II, Chap.V),
the cases in which $M=[(g)]$ and $=[\binom{g}{g}\binom{1}{1}_g]$ were discussed.
\end{remark}

\section{Defining equations} We now describe the defining equations for those curves listed in Corollary. 
In \cite{FZ2,SS,ST}, the defining equations were computed step by step by using quadratic Cremona transformations. 
But, for some cases, we encountered a difficulty to evaluate points in some special positions. 
We here employ the method used by Degtyarev in \cite{D}.

\begin{lem}\label{square} Consider two polynomials
\[
g(t)=\sum_{i=0}^dc_it^i, \quad \delta(t)=\sum_{i=0}^{2d}d_it^i\in \mathrm{C}[t].
\]
Suppose $\delta(0)=d_0\neq 0$. For $k\leq d$, we have  $t^k\,|\,(g^2-\delta)$ if and only if 
\begin{enumerate}\item[{\rm (1)}] $c_0=\pm \sqrt{d_0}$,
\item[{\rm (2)}] $c_j=(d_j-\sum_{i=1}^{j-1}c_ic_{j-i})/(2c_0)$ for $j=1,\ldots,k-1$.
\end{enumerate}
\end{lem}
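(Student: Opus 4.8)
The plan is to reduce the divisibility condition $t^k \mid (g^2-\delta)$ to a statement about the vanishing of the low-order Taylor coefficients of $g^2-\delta$ and then read off those coefficients explicitly. Write $g(t)^2 = \sum_{j\geq 0} e_j t^j$ where, by the Cauchy product formula, $e_j = \sum_{i=0}^{j} c_i c_{j-i}$ for $0\leq j\leq d$ (and a similar but longer expression for $j>d$, which will not matter since we only care about indices $j<k\leq d$). Then $t^k \mid (g^2-\delta)$ is equivalent to $e_j = d_j$ for all $j=0,1,\ldots,k-1$. So the whole lemma is just a careful unwinding of these $k$ equations, solved recursively for $c_0,c_1,\ldots,c_{k-1}$ in terms of the $d_i$'s.

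First I would treat the case $j=0$: the equation $e_0=d_0$ reads $c_0^2=d_0$, which since $d_0\neq 0$ forces $c_0=\pm\sqrt{d_0}$, giving condition~(1); in particular $c_0\neq 0$, which is what makes the subsequent recursion well-defined. Next, for $1\leq j\leq k-1$, I would isolate the two terms in $e_j$ involving $c_0$: since $e_j = \sum_{i=0}^{j}c_i c_{j-i} = 2c_0 c_j + \sum_{i=1}^{j-1}c_i c_{j-i}$, the equation $e_j=d_j$ becomes $2c_0 c_j = d_j - \sum_{i=1}^{j-1}c_i c_{j-i}$, and dividing by $2c_0$ (legitimate because $c_0\neq 0$) yields exactly the formula in condition~(2). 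This shows that $t^k\mid(g^2-\delta)$ implies (1) and (2).

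For the converse, I would run the same computation in reverse: assuming (1) and (2), an induction on $j$ shows that $e_j=d_j$ for $j=0,\ldots,k-1$ — the base case $j=0$ is (1), and the inductive step is obtained by multiplying the defining relation in (2) by $2c_0$ and rearranging to recover $e_j=d_j$, using that $c_0,\ldots,c_{j-1}$ already satisfy the earlier relations (they are only used through the sum $\sum_{i=1}^{j-1}c_ic_{j-i}$, which is determined once $c_1,\ldots,c_{j-1}$ are). Hence $g^2-\delta$ has vanishing coefficients in degrees $0$ through $k-1$, i.e. $t^k\mid(g^2-\delta)$.

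There is no real obstacle here; the only point requiring a word of care is the extraction of the $c_0$-terms from $e_j$ and the consequent division by $c_0$, which is exactly where the hypothesis $d_0\neq 0$ is used, and the mild bookkeeping that the recursion in (2) determines $c_1,\ldots,c_{k-1}$ uniquely once a sign for $c_0$ is fixed. I would also remark in passing that the hypothesis $k\leq d$ guarantees $e_j = \sum_{i=0}^j c_ic_{j-i}$ with all indices in range, so no truncation subtleties arise.
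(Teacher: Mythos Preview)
Your proof is correct and follows exactly the approach of the paper: write $g(t)^2=\sum_j b_j t^j$ with $b_j=\sum_{i=0}^j c_i c_{j-i}$ for $j\le d$, then equate coefficients. The paper's proof is in fact just this one line, so you have simply spelled out the elementary details that the authors left implicit.
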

\begin{proof} Write $g(t)^2=\sum_{j=0}b_jt^j$. We see that $b_j=\sum_{i=0}^jc_ic_{j-i}$ for $j\leq d$.
\end{proof}

\begin{proposition}\label{equations}
The defining equations of irreducible plane curves of type $(d,d-2)$
with genus $g$ having only cusps are the following
(up to projective equivalence, the $\lambda_i$'s are distinct). 
\begin{enumerate} \item[{\rm (a)}] ${\displaystyle y^kz^2+2Gz+\Bigl\{G^2-\Delta\Bigr\}/y^k=0}$,
where \[
\Delta(x,y)=\prod_{i=1}^{n'}(x-\lambda_iy)^{2b_i+1}\prod_{i=n'+1}^{2g+2}(x-\lambda_iy).
\]
Letting $G(x,y)=\sum_{h=0}^{k+1}c_hx^{k+1-h}y^{h}$, the coefficients $c_0,\ldots,c_{k-1}$ are determined by the condition $y^k\,|\,(G(1,y)^2-\Delta(1,y))$ (See Lemma~\ref{square}).
\item[{\rm (b)}] ${\displaystyle (y^{k}z+\sum_{h=0}^{k+1}c_hx^{k+1-h}y^h)^2y-\prod_{i=1}^{n'}(x-\lambda_iy)^{2b_i+1}\prod_{i=n'+1}^{2g+1}(x-\lambda_iy)=0}$.
\item[{\rm (c)}] ${\displaystyle (y^{k}z+\sum_{h=0}^{k+1}c_hx^{k+1-h}y^h)^2-y^{2j+1}\prod_{i=1}^{n'}(x-\lambda_iy)^{2b_i+1}\prod_{i=n'+1}^{2g+1}(x-\lambda_iy)=0}$, 

\noindent
where $c_0\neq 0$.
\end{enumerate}
\end{proposition}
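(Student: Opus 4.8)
The plan is to treat the three families (a), (b), (c) uniformly, proving in each case both implications of the classification: that the displayed equation defines an irreducible plane curve of type $(d,d-2)$ with genus $g$, cusps only, and the stated data; and, conversely, that every cuspidal plane curve of type $(d,d-2)$ with genus $g$ can be brought to one of these shapes up to projective equivalence. Throughout I write the curve in the normal form $Fz^2+2Gz+H=0$ with $Q=(0,0,1)$ as in Section~\ref{Prel}, and put $\Delta=G^2-FH$.

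For the first implication I would read $F$ and $\Delta$ off the displayed equation directly. In case (a) the displayed equation \emph{is} the normal form, so $F=y^k$ and $\Delta$ is exactly the displayed product, of degree $2d-2$; Lemma~\ref{square}, applied with $g(t)=G(1,t)$, $\delta(t)=\Delta(1,t)$ and $d_0=\Delta(1,0)\neq0$, shows that the choice of $c_0,\ldots,c_{k-1}$ indicated there makes $y^k\mid G^2-\Delta$, so that $H=(G^2-\Delta)/y^k$ is a genuine polynomial and the equation is legitimate. In cases (b) and (c), expanding $(y^kz+G)^2y-\Delta_1$, resp.\ $(y^kz+G)^2-y^{2j+1}\Pi$, gives $F=y^{2k+1}$, resp.\ $F=y^{2k}$, together with $\Delta=y^{2k+1}\Delta_1$, resp.\ $\Delta=y^{2k+2j+1}\Pi$, so no divisibility condition on $G$ is needed. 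In every case $F$ is a power of $y$ and $\Delta$ is a power of $y$ times a product of pairwise distinct linear forms $x-\lambda_iy$, so the $2$--formula is immediate: one pair $(d-2,q)$ from the common root $y=0$ (with $q=0$, $2k+1$, $2k+2j+1$ in the three cases), the pairs $(0,2b_i+1)$ from the odd-order factors, and finitely many pairs $(0,1)$ from the simple factors. Feeding $T'(C)$ into Proposition~\ref{formula} yields $\mathrm{Data}(C)$ in class (a), (b), (c) respectively and simultaneously confirms that $\mathrm{mult}_Q(C)=d-2$ and that the remaining singularities are exactly the cusps $(2_{b_i})$. Irreducibility then follows from Remark~\ref{criterion}: some $q_i$ is odd, while $\mathrm{GCD}(F,G,H)=1$ holds automatically in cases (a) and (b) — in (a) because $c_0=\pm\sqrt{d_0}\neq0$ forces $y\nmid G$, in (b) because $H\equiv-x^{d}\not\equiv0\pmod y$ — and in case (c) precisely because the hypothesis $c_0\neq0$ forces $y\nmid H$.

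For the converse, let $C$ be an arbitrary cuspidal plane curve of type $(d,d-2)$ with genus $g$. By the Corollary its data lies in class (a), (b) or (c), so $Q$ has a single tangent line and $\underline{m}_Q(C)$ is $(k)$, $(2k+1,2_k)$ or $(2k,2_{k+j})$; reading Proposition~\ref{formula} backwards, $F$ has a single root, necessarily of multiplicity $\deg F=d-2$. A linear change of $x,y$ moves this root to $y=0$, and after rescaling the equation $F=y^{d-2}$. The shape of $T(C)$ then forces $\Delta$ to be a nonzero constant times a power of $y$ times the displayed product of linear forms, and scaling $x$ makes that constant $1$. In case (a), $y^kH=G^2-\Delta$ gives $y^k\mid G^2-\Delta$, so Lemma~\ref{square} recovers $c_0,\ldots,c_{k-1}$, while the translation $z\mapsto z-\ell$ for a suitable linear form $\ell$ removes the two remaining coefficients of $G$ without changing $\Delta$ or the projective class of $C$; this is form (a). In cases (b) and (c), the factorization of $\Delta$ together with $F=y^{d-2}$ forces $y^{k+1}\mid G$, resp.\ $y^k\mid G$; writing $G=y^{k+1}\widetilde G$, resp.\ $G=y^k\widetilde G$, and substituting back recasts the equation as $(y^kz+\widetilde G)^2y-\Delta_1=0$, resp.\ $(y^kz+\widetilde G)^2-y^{2j+1}\Pi=0$, and in case (c) irreducibility of $C$ forces $\widetilde G(1,0)\neq0$, i.e.\ $c_0\neq0$.

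I expect the main obstacle to be the bookkeeping in this converse step for cases (b) and (c): keeping careful track of which coefficients of $G$ are pinned down by the factorization of $\Delta$ and by Lemma~\ref{square}, which are eliminated by the admissible transformations $z\mapsto z-\ell$ and the rescalings of $x$, and verifying that the residual equation is \emph{exactly} the displayed one — including the small but necessary point that in case (c) the normalization one is led to is equivalent to the irreducibility condition $c_0\neq0$.
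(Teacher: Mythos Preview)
Your proposal is correct and follows essentially the same approach as the paper: compute the $2$--formula $T(C)$ from the data in each class via Proposition~\ref{formula}, read off $F$ and $\Delta$ after normalizing coordinates, deduce the divisibility constraints on $G$ (Lemma~\ref{square} in case~(a), the factor $y^{k+1}$ resp.\ $y^k$ in cases~(b),~(c)), and check irreducibility through Remark~\ref{criterion}. The paper's proof is terser and treats only the direction from curve to equation, whereas you spell out both directions; one small over-step is your use of $z\mapsto z-\ell$ in case~(a) to kill $c_k,c_{k+1}$, which is harmless but unnecessary since the stated form leaves those two coefficients free.
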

\begin{proof}
Class (a).
In this case,
in view of the argument in the proof of Theorem~\ref{theorem 1},~(ii),
we have $T(C)=\{(k,0),(0,2b_1+1),\ldots,(0,2b_{n'}+1), (0,1),\ldots,(0,1)\}$. Thus,
we can write $F=y^k$ and $\Delta$ as above.
We must have $y^k|(G^2-\Delta)$.
In view of Lemma~\ref{square}, the coefficients $c_0,\ldots,c_{k-1}$ are uniquely determined.
In particular, $c_0=\pm 1$.
So by Remark~\ref{criterion}, the defining equation is irreducible.

Class (b): We have $T(C)=\{(2k+1,2k+1),(0,2b_1+1),\ldots,(0,2b_{n'+1})$, $(0,1),\ldots,(0,1)\}$. We can arrange coordinates as 
\[
F=y^{2k+1}, \quad \Delta=y^{2k+1}\prod_{i=1}^{n'}(x-\lambda_iy)^{2b_i+1}\prod_{i=n'+1}^{2g+1}(x-\lambda_iy).
\]
We infer that $G=y^{k+1}G_0$ for some $G_0$. 

Class (c): We have $T(C)=\{(2k,2k+2j+1),(0,2b_1+1),\ldots,(0,2b_{n'+1})$, $(0,1),\ldots,(0,1)\}$. We can arrange coordinates as 
\[
F=y^{2k}, \quad \Delta=y^{2k+2j+1}\prod_{i=1}^{n'}(x-\lambda_iy)^{2b_i+1}\prod_{i=n'+1}^{2g+1}(x-\lambda_iy).
\] 
It follows that $G=y^kG_0$ for some $G_0$.
If we write $G_0=\sum_{h=0}^{k+1}c_hx^{k+1-h}y^h$, then we must have $c_0\neq 0$,
for otherwise the defining equation becomes reducible (See Remark~\ref{criterion}). 
\end{proof}
\begin{example}
We give the defining equation of a cuspidal septic curve $C$
with Data$(C)=[(5),(2),(2),(2),(2)]$ which are birational to the elliptic curve $y^2=(x^2-1)(x^2-\lambda^2)$, \,\,($\lambda\neq \pm 1,0$). 
\begin{multline*}
y^5z^2+\bigl\{2x^4-3(\lambda^2+1)x^2y^2+\frac{3}{4}(\lambda^4+6\lambda^2+1)y^4\bigr\}x^2z\\
-\frac{1}{8}(\lambda^2+1)(\lambda^4-10\lambda^2+1)x^6y
+\frac{3}{64}\bigl\{3\lambda^8-28\lambda^6-78\lambda^4-28\lambda^2+3\bigr\}x^4y^3\\
+3\lambda^4(\lambda^2+1)x^2y^5-\lambda^6y^7=0
\end{multline*}
\end{example}

\begin{proposition}
The defining equations of irreducible plane curves of type $(d,d-2)$
with genus $g$ having only bibranched singularities are the following
(up to projective equivalence, the $\lambda_i$'s are distinct). 
\begin{enumerate}
\item[{\rm (e)}]
${\displaystyle (y^{k}z+\sum_{h=0}^{k+1}c_hx^{k+1-h}y^h)^2-y^{2j}\prod_{i=1}^{n}(x-\lambda_iy)^{2b_i}\prod_{i=n+1}^{n+2g+2}(x-\lambda_iy)=0}$,

\noindent
where $c_0\neq 0$.

\item[{\rm (f)}] $y^{2k+r}z^2+2y^{k}G_0z+\Bigl\{G_0^2-\Delta_0\Bigr\}/y^r=0$,

\noindent
where 
\[
\Delta_0(x,y)=\prod_{i=1}^{n}(x-\lambda_iy)^{2b_i}\prod_{i=n+1}^{n+2g+2}(x-\lambda_iy)
\]
and the coefficients
$c_0,\ldots,c_{r-1}$ of $G_0(x,y)=\sum_{h=0}^{k+r+1}c_hx^{k+r+1-h}y^{h}$
are determined by the condition
$y^r\,|\,(G_0(1,y)^2-\Delta_0(1,y))$
(Cf. $Lemma~\ref{square}$) and $c_r$ is chosen so that
$y^{r+1}\,{\not|}\,\,(G_0(1,y)^2-\Delta_0(1,y))$.

\item[{\rm (aa)}]
$x^{r}y^{k}z^2+2Gz+\Bigl\{G^2-\Delta\Bigr\}/(x^ry^k)=0$,

\noindent
where 
\[
\Delta(x,y)=\prod_{i=1}^{n}(x-\lambda_iy)^{2b_i}\prod_{i=n+1}^{n+2g+2}(x-\lambda_iy) \quad (\lambda_i\neq 0 \,\,{ for \,\,all } \,\,i).
\]
Write $G(x,y)=\sum_{h=0}^{k+r+1}c_hx^{k+r+1-h}y^{h}$. 
The coefficients $c_0,\ldots,c_{k-1}$, $c_{k+2}, \ldots,c_{k+r+1}$
are determined by the conditions 
$y^k\,|\,(G(1,y)^2-\Delta(1,y))$ and $x^r\,|\,(G(x,1)^2-\Delta(x,1))$
(Cf. $Lemma~\ref{square}$).

\item[{\rm (aa+)}] $x(y^{k}z+2G_0)z+\Bigl\{xG_0^2-\Delta_0\Bigr\}/y^k=0$,

\noindent
where 
\[
\Delta_0(x,y)=\prod_{i=1}^{n}(x-\lambda_iy)^{2b_i}\prod_{i=n+1}^{n+2g+1}(x-\lambda_iy) \quad (\lambda_i\neq 0 \,\,{ for \,\,all } \,\,i).
\]
Write $G_0(x,y)=\sum_{h=0}^{k+1}c_hx^{k+1-h}y^{h}$. The coefficients $c_0,\ldots,c_{k-1}$ are determined by the condition $y^k\,|\,(G_0(1,y)^2-\Delta_0(1,y))$.

\item[{\rm (aa1)}] $xyz^2-(x-\lambda y)^{4}=0$ \,\,($\lambda\neq 0, \,\,g=0$). 

\item[{\rm (aa2)}] $xyz^2-(x-\lambda_1y)^{2}(x-\lambda_2y)^2=0$ \,\,($\lambda_1\lambda_2\neq 0, \,\,g=0$). 

\item[{\rm (aa3)}] $xyz^2-(x-\lambda_1y)^{2}(x-\lambda_2y) (x-\lambda_3y)=0$ \,\,($\lambda_1\lambda_2\lambda_3\neq 0, \,\,g=1$). 

\item[{\rm (aa4)}] $xyz^2-\prod_{i=1}^4(x-\lambda_i y)=0$ \,\,($\lambda_i\neq 0 \,\,{ for \,\,all } \,\,i, \,\,g=2$). 

\smallskip
\item[{\rm (ab)}] $x^ry^{2k+1}z^2+2y^{k+1}G_0z+\Bigl\{yG_0^2-\Delta_0\Bigr\}/x^r=0$,

\noindent
where 
\[
\Delta_0(x,y)=\prod_{i=1}^{n}(x-\lambda_iy)^{2b_i}\prod_{i=n+1}^{n+2g+1}(x-\lambda_iy) \quad (\lambda_i\neq 0 \,\,{ for \,\,all } \,\,i)
\]
and the coefficients $c_{k+2}, \ldots, c_{k+r+1}$ of $G_0(x,y)=\sum_{h=0}^{k+r+1}c_hx^{k+r+1-h}y^{h}$ are determined by the condition $x^r\,|\,(G_0(x,1)^2-\Delta_0(x,1))$. 

\item[{\rm (ab+)}] ${\displaystyle (y^{k}z+\sum_{h=0}^{k+1}c_hx^{k+1-h}y^h)^2xy-\prod_{i=1}^{n}(x-\lambda_iy)^{2b_i}\prod_{i=n+1}^{n+2g}(x-\lambda_iy)=0}$,

\noindent
where $\lambda_i\neq 0$ for all $i$.

\item[{\rm (ac)}] $x^ry^{2k}z^2+2y^{k}G_0z+\Bigl\{G_0^2-\Delta_0\Bigr\}/x^r=0$,

\noindent
where 
\[
\Delta_0(x,y)=y^{2j+1}\prod_{i=1}^{n}(x-\lambda_iy)^{2b_i}\prod_{i=n+1}^{n+2g+1}(x-\lambda_iy) \quad (\lambda_i\neq 0 \,\,{ for \,\,all } \,\,i)
\]
and the coefficients 
$c_{k+2}, \ldots, c_{k+r+1}$ of $G_0(x,y)=\sum_{h=0}^{k+r+1}c_hx^{k+r+1-h}y^{h}$ are determined by the condition
$x^r\,|\,(G_0(x,1)^2-\Delta_0(x,1))$ and $c_0\neq 0$,
which is required for the irreducibility of the defining equation
(Remark~\ref{criterion}). 

\item[{\rm (ac+)}]
${\displaystyle (y^{k}z+\sum_{h=0}^{k+1}c_hx^{k+1-h}y^h)^2x-y^{2j+1}\prod_{i=1}^{n}(x-\lambda_iy)^{2b_i}\prod_{i=n+1}^{n+2g}(x-\lambda_iy)=0}$,

\noindent
where $\lambda_i\neq 0$ for all $i$ and $c_0\neq 0$.

\item[{\rm (bb)}] 
${\displaystyle (x^{r}y^{k}z+\sum_{h=0}^{k+r+1}c_hx^{k+r+1-h}y^h)^2xy} {\displaystyle -\prod_{i=1}^{n}(x-\lambda_iy)^{2b_i}\prod_{i=n+1}^{n+2g}(x-\lambda_iy)=0}$,

\noindent
where $\lambda_i\neq 0$ for all $i$.

\item[{\rm (bc)}] ${\displaystyle (x^{r}y^{k}z+\sum_{h=0}^{k+r+1}c_hx^{k+r+1-h}y^h)^2y}\\
\quad\hfill {\displaystyle-x^{2l+1}\prod_{i=1}^{n}(x-\lambda_iy)^{2b_i}\prod_{i=n+1}^{n+2g}(x-\lambda_iy)=0}$,

\noindent
where $\lambda_i\neq 0$ for all $i$ and $c_{k+r+1}\neq 0$.

\item[{\rm (cc)}] ${\displaystyle (x^{r}y^{k}z+\sum_{h=0}^{k+r+1}c_hx^{k+r+1-h}y^h)^2}\\
\quad\hfill {\displaystyle -x^{2l+1}y^{2j+1}\prod_{i=1}^{n}(x-\lambda_iy)^{2b_i}\prod_{i=n+1}^{n+2g}(x-\lambda_iy)=0}$,

\noindent
where $\lambda_i\neq 0$ for all $i$ and $c_0c_{k+r+1}\neq 0$.

\end{enumerate}
\end{proposition}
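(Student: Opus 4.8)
The plan is to follow the method of Proposition~\ref{equations} verbatim, replacing the cuspidal $2$--formulae by the ones corresponding to the bibranched classes of Corollary~\ref{cor:2},~(ii). For each class I would first recover $T(C)$ by inverting the dictionary of Proposition~\ref{formula}: each $\binom{1}{1}_{b_j}$ (resp. $(2_{b_j})$) listed after the symbol $m$ at $Q$ comes from a pair $(0,2b_j)$ (resp. $(0,2b_j+1)$), while the bibranched symbol itself comes from the (at most two, since $r(M)\le2$, hence $N(M)\le2$) pairs $(p_i,q_i)$ with $p_i>0$, via the explicit formulae of Proposition~\ref{formula},~(1)--(2). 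The degenerate items (aa+), (ab+), (ac+) and (aa1)--(aa4) are exactly the boundary values of the parameters at which one of these pairs $(p_i,q_i)$ with $p_i>0$ collapses to $(1,1)$ or $(1,0)$, so that a simple factor of $\Delta$ is forced to coincide with $x$ or $y$. Placing the root $t_i$ carrying $p_i>0$ at $(0,1)$, a second such root (when the symbol at $Q$ involves two tangent lines) at $(1,0)$, and the remaining roots at $x=\lambda_i y$, and using $\mathrm{ord}_{t_i}(F)=p_i$, $\mathrm{ord}_{t_i}(\Delta)=q_i$, then fixes $F$ (always a monomial in $x,y$) and $\Delta$, up to a projective transformation, in the monomial-times-product shape displayed in each item.

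Writing $C$ as $Fz^2+2Gz+H=0$ with $H=(G^2-\Delta)/F$, the remaining constraints are (a) that $H$ be a polynomial and (b) irreducibility. Condition~(a) reads $F\mid(G^2-\Delta)$, which after the above normalisation becomes a conjunction of divisibilities $y^{a}\mid(G(1,y)^2-\Delta(1,y))$ and $x^{b}\mid(G(x,1)^2-\Delta(x,1))$; by Lemma~\ref{square}, each determines the corresponding block of ``outer'' coefficients of $G$ uniquely, once a square root is chosen, which is what phrases such as ``$c_0,\ldots,c_{r-1}$ are determined by $y^r\mid(G_0(1,y)^2-\Delta_0(1,y))$'' in the statement abbreviate. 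The coefficients of $G$ not fixed this way, together with the $\lambda_i$'s, are the free parameters of the family. For~(b) I would invoke Remark~\ref{criterion}: $\Delta$ is constructed so as to retain $2g+2$ simple factors $x-\lambda_i y$ (or, in the degenerate cases, a power of $x$ or $y$ of odd order), hence is not a square, so property~(iii) holds; and since $F$ is a monomial, $\mathrm{GCD}(F,G,H)=1$ amounts to requiring that each variable dividing $F$ fail to divide $G$ as well, which is exactly the content of the side conditions $c_0\neq0$, $c_{k+r+1}\neq0$, $c_0c_{k+r+1}\neq0$, or ``$c_r$ chosen so that $y^{r+1}\nmid(G_0^2-\Delta_0)$''. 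Conversely, any admissible choice of distinct $\lambda_i$'s and free coefficients produces, by Theorem~\ref{theorem 1},~(ii), an irreducible curve of type $(d,d-2)$ with the prescribed Data, so the displayed families are exhaustive.

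The step I expect to be the real work is the case analysis for the degenerate items (aa+), (ab+), (ac+), (aa1)--(aa4). There the generic monomial pattern for $F$ and $\Delta$ breaks down --- a factor $x-\lambda_i y$ merges with $x$ or $y$, one of the divisibility conditions of Lemma~\ref{square} weakens or disappears, and the $\mathrm{GCD}$ computation, hence the precise irreducibility inequality on the $c_h$'s, must be redone from scratch. For (aa1)--(aa4), where $g\le2$ and $F=xy$, the cleanest route is a direct check that the displayed equations of degree $\le6$ have $2$--formula of the required shape and have no linear component. Everything else is a mechanical translation of $T(C)$ into monomials followed by Lemma~\ref{square}.
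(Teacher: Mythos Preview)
Your plan is essentially the paper's own: it too recovers $T(C)$ class by class from Proposition~\ref{formula}, places the roots carrying $p_i>0$ at $y=0$ (and $x=0$ when $N=2$), reads off $F$ and $\Delta$, and then uses Lemma~\ref{square} for the divisibility $F\mid(G^2-\Delta)$ and Remark~\ref{criterion} for irreducibility. The paper only spells out (e), (f), (aa) and the $d=4$ table, writing ``for the remaining classes, we omit the details'', so your outline is in fact more complete than the published proof.

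One small correction to watch when you execute the argument: your sentence ``$\mathrm{GCD}(F,G,H)=1$ amounts to requiring that each variable dividing $F$ fail to divide $G$ as well'' is not what happens. In classes (e), (f), (ac), (bb), (bc), (cc) one has $G=y^{k}G_0$ (or $x^{r}y^{k}G_0$), so $y\mid G$ and, when relevant, $x\mid G$ automatically; the coprimality condition is therefore $y\nmid H$ (resp.\ $x\nmid H$). The side conditions $c_0\neq0$, $c_{k+r+1}\neq0$, or ``$c_r$ chosen so that $y^{r+1}\nmid(G_0^2-\Delta_0)$'' enforce exactly that: e.g.\ in (cc), $H=G_0^2-x^{2l+1}y^{2j+1}\Delta_0'$, so $H(x,0)=c_0^2x^{2(k+r+1)}$ and $H(0,y)=c_{k+r+1}^2y^{2(k+r+1)}$, whence $c_0c_{k+r+1}\neq0\Leftrightarrow x\nmid H$ and $y\nmid H$. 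With that adjustment your irreducibility step goes through via Remark~\ref{criterion} as stated.
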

\begin{proof}
Class (e): In this case, we have $T(C)=\{(2k,2k+2j),(0,2b_1),\ldots,$
$(0,2b_{n}),(0,1),\ldots,(0,1)\}$. We can arrange coordinates as 
\[
F=y^{2k}, \quad \Delta=y^{2k+2j}\prod_{i=1}^{n}(x-\lambda_iy)^{2b_i}\prod_{i=n+1}^{n+2g+2}(x-\lambda_iy).
\]
We infer that $G=y^kG_0$ for some $G_0$.

Class (f): We have $T(C)=\{(2k+r,2k),(0,2b_1),\ldots,(0,2b_{n}),(0,1),\ldots,$
$(0,1)\}$.
We can arrange coordinates as 
\[
F=y^{2k+r}, \quad \Delta=y^{2k}\prod_{i=1}^{n}(x-\lambda_iy)^{2b_i}\prod_{i=n+1}^{n+2g+2}(x-\lambda_iy).
\]
We infer that $G=y^kG_0$ for some $G_0$. Write $\Delta=y^{2k}\Delta_0$. Furthermore, we must have $y^r|(G_0^2-\Delta_0)$.

Class (aa): We may assume $k\geq r$. We have the case in which $T(C)= \{(k,0),(r,0),(0,2b_1),$ $\ldots,(0,2b_{n}), (0,1), \ldots, (0,1)\}$. We can then arrange coordinates as 
\[
F=x^ry^{k} \quad \Delta=\prod_{i=1}^{n}(x-\lambda_iy)^{2b_i}\prod_{i=n+1}^{n+2g+2}(x-\lambda_iy) \quad (\lambda_i\neq 0 \,\,\mathrm{ for \,\,all } \,\,i).
\]
We infer that $y^k|(G^2-\Delta)$ and $x^r|(G^2-\Delta)$. 

In case $r=1$, we also have the case in which $T(C)= \{(k,0),(1,1),(0,2b_1),$
$\ldots,(0,2b_{n})$, $(0,1), \ldots, (0,1)\}$. We obtain Class (aa+). 

If $d=4$, then we have four more classes:
\[
\begin{array}{l|l|l}
\textrm{Class}&\quad\quad\quad T(C)&g\\\hline
\textrm{(aa1)}& \{(1,1),(1,1),(0,4)\} &0\\
\textrm{(aa2)}&\{(1,1),(1,1),(0,2),(0,2)\}&0\\
\textrm{(aa3)}&\{(1,1),(1,1), (0,2), (0,1),(0,1)\}&1\\
\textrm{(aa4)}&\{(1,1), (1,1), (0,1), (0,1), (0,1), (0,1)\}&2\\ 
\end{array}
\]

For the remaining classes, we omit the details.
\end{proof} 

\bigskip
\begin{acknowledgement}
The first author would like to thank Prof. I.V.Dolgachev for comments on the book \cite{Cob}. 
\end{acknowledgement}

\bigskip
\bigskip
\begin{flushright}
\begin{tabular}{l}
Department of Mathematics \\
Faculty of Science \\
Saitama University \\
Shimo--Okubo 255 \\
Sakura--ku, Saitama 338--8570, Japan \\
\\
E--mail: \texttt{fsakai@rimath.saitama-u.ac.jp}\\
E--mail: \texttt{msaleem@yahoo.com}\\
E--mail: \texttt{ktono@rimath.saitama-u.ac.jp}%
\end{tabular}
\end{flushright}

\end{document}